\documentclass{birkau}

\usepackage{amsmath,amssymb,url,tikz,mathrsfs,hyperref}
\usepackage[mathcal]{euscript} 

\usetikzlibrary{arrows,shapes,positioning}
\usetikzlibrary{decorations.markings}
\tikzstyle arrowstyle=[scale=1]
\tikzstyle directed=[postaction={decorate,decoration={markings,
    mark=at position 0.535 with {\arrow[arrowstyle]{to}}}}]

\numberwithin{equation}{section}

\theoremstyle{plain}
\newtheorem{theorem}{Theorem}[section]
\newtheorem{lemma}[theorem]{Lemma}
\newtheorem{proposition}[theorem]{Proposition}
\newtheorem{corollary}[theorem]{Corollary}

\theoremstyle{definition}
\newtheorem{definition}[theorem]{Definition}
\newtheorem{notation}[theorem]{Notation}

\newtheorem{problem}{Problem}

\renewcommand{\c}[1]{\mathcal{#1}}
\renewcommand{\bf}[1]{\mathbf{#1}}
\newcommand{\bb}[1]{\mathbb{#1}}
\newcommand{\trm}[1]{\textrm{#1}}
\newcommand{\s}[1]{\mathscr{#1}}
\newcommand{\inn}[1]{\langle #1\rangle}
\newcommand{\deq}{\mathrel{\mathop:}=}
\newcommand{\nmodels}{\not\models}
\renewcommand{\le}{\leqslant}
\renewcommand{\ge}{\geqslant}
\renewcommand{\b}{\breve{\ }}
\newcommand{\m}{\wedge}
\renewcommand{\j}{\vee}
\newcommand{\pw}{\raise0.515ex\hbox{{\scalebox{1.125}[1.125]{\(\wp\)}}}}
\DeclareMathOperator{\Var}{Var}
\DeclareMathOperator{\Si}{Si}
\DeclareMathOperator{\ISU}{\bb{ISU}}
\DeclareMathOperator{\Cm}{\bf{Cm}}
\DeclareMathOperator{\I}{\bb{I}}
\DeclareMathOperator{\IS}{\bb{IS}}
\DeclareMathOperator{\So}{\bb{S}}
\DeclareMathOperator{\U}{\bb{U}}

\begin{document}

\title[Varieties of semiassociative relation algebras and tense algebras]{Varieties of semiassociative relation algebras and tense algebras}

\author[J. M. Koussas]{James M. Koussas}
\address{Department of Mathematics \& Statistics\\
La Trobe University\\Victoria 3086\\Australia}
\email{j.koussas@latrobe.edu.au}

\author[T. Kowalski]{Tomasz Kowalski}
\address{Department of Mathematics \& Statistics\\
La Trobe University\\Victoria 3086\\Australia}
\urladdr{https://tomasz-kowalski.github.io}
\email{t.kowalski@latrobe.edu.au}

\subjclass{03G15, 06E25, 08B15}

\keywords{Semiassociative relation algebras, tense algebras, lattices of subvarieties.}

\begin{abstract}
It is well known that the subvariety lattice of the variety of relation algebras has exactly three atoms. The (join-irreducible) covers of two of these atoms are known, but a complete classification of the (join-irreducible) covers of the remaining atom has not yet been found. These statements are also true of a related subvariety lattice, namely the subvariety lattice of the variety of semiassociative relation algebras. The present article shows that this atom has continuum many covers in this subvariety lattice (and in some related subvariety lattices) using a previously established term equivalence between a variety of tense algebras and a variety of semiassociative \(r\)-algebras.
\end{abstract}

\maketitle

\section{Introduction}\label{sec:intro}

Varieties have been a major focus of research in relation algebras for a number of years. For example, several of the early landmark results in the subject focus on the variety of representable relation algebras. In 1955, Tarski showed in  \cite{tar3} that the class of all representable relation algebras is indeed a variety. A year later, in \cite{lyndon}, Lyndon gave the first (equational) basis for this variety. In 1964, Monk showed in \cite{monk} that this variety is not finitely based. 

The lattice of subvarieties of the variety of all relation algebras was first studied extensively by J\'onsson in \cite{varRA}, although Tarski did publish some results much earlier in \cite{tarring}. In \cite{tarring}, Tarski showed, using a result from J\'onsson and Tarski \cite{bao2}, that this lattice has exactly three atoms. These atoms are generated by \(\bf{A}_1\), \(\bf{A}_2\), and \(\bf{A}_3\) (the minimal subalgebras of the full relation algebras on a 1-element set, a 2-element set, and a 3-element set, respectively). As the variety of relation algebras is congruence distributive, we only need to look for join-irreducible varieties to find the other varieties of small height. It follows from results in \cite{bao2} that the variety generated by \(\bf{A}_1\) has no join-irreducible covers. In \cite{freedisc}, Andr\'eka, J\'onsson, and N\'emeti showed that the variety generated by \(\bf{A}_2\) has exactly one join-irreducible cover. The variety generated by \(\bf{A}_3\) is known to have at least 20 finitely generated join-irreducible covers and at least one infinitely generated join-irreducible cover; generators of these covers are listed by Jipsen in \cite{jipphd}. However, it is not yet known if there are more covers. The problem of completely classifying these covers is posed (in various forms) by J\'onsson and Maddux in \cite{perspecRA}, by Hirsch and Hodkinson in \cite{rabg}, and by Givant in \cite{advRA}. Further results on this lattice can be found in J\'onsson \cite{varRA} and Andr\'eka, Givant, and N\'emeti \cite{dpfetoRA}, for example.

Semiassociative relation algebras arise fairly naturally in the study of relation algebras and the calculus of relations; see Maddux \cite{seqcalc}, for example. As such, the subvariety lattice of the variety of all semiassociative relation algebras has attracted interest from researchers in this field. In \cite{freedisc}, Andr\'eka, J\'onsson, and N\'emeti show that the properties of the subvariety lattice of the variety of relation algebras that we mentioned above also hold in this lattice. In \cite{tot}, Jipsen, Kramer, and Maddux show that \(\bf{A}_3\) has a countably infinite family of covers (in the semiassociative case) by constructing a term equivalence and a countably infinite family of covers of the variety generated by \(\bf{T}_0\) (the two element Boolean algebra with a pair of identity operators).  In 1999, the problem of showing that \(\bf{T}_0\) has continuum many covers was posed by Peter Jipsen in a conversation with the second author relating to Kowalski \cite{vartense}. In Section \ref{sec:vars} of the present article we will solve this problem, and hence conclude that \(\bf{A}_3\) has continuum many covers in the subvariety lattice of the variety of semiassociative relation algebras.

\section{Preliminaries}\label{sec:prelim}

\subsection{Relation-type algebras}\label{subsec:NRA}

For an extensive introduction to the theory and history of relation algebras, we refer the reader to Hirsch and Hodkinson \cite{rabg}, Maddux \cite{madduxRA} and Givant \cite{intRA}; shorter introductions can be found in Chin and Tarski \cite{chintar} and J\'onsson \cite{varRA}. For sources that cover nonassociative and semiassociative relation algebras, we refer the reader to Maddux \cite{varcontRA}, Maddux \cite{seqcalc}, Hirsch and Hodkinson~\cite{rabg}, and Maddux \cite{madduxRA}. Subvariety lattices are discussed in J\'onsson \cite{varRA}, Andr\'eka, J\'onsson, and N\'emeti \cite{freedisc}, Jipsen \cite{jipphd}, Andr\'eka, Givant, and N\'emeti \cite{subRRA}, Jipsen, Kramer, and Maddux \cite{tot}, Andr\'eka, Givant, and N\'emeti \cite{dpfetoRA}, and Givant \cite{advRA}. We will begin by recalling some definitions from \cite{varcontRA} and \cite{tot}.

\begin{definition}
An algebra \(\bf{A} = \inn{A;\j,\m,\cdot,{^\prime},\b,0,1,e}\) is called a \emph{nonassociative relation algebra} iff \(\inn{A;\j,\m,{^\prime},0,1}\) is a Boolean algebra, \(\cdot\) is a binary operation, \(e\) is an identity element for \(\cdot\), and the \emph{triangle laws} hold in \(\bf{A}\), i.e.,
\[
(x \cdot y) \m z = 0 \iff (x\b \cdot z) \m y = 0 \iff (z \cdot y\b) \m x = 0,
\]
for all \(x,y,z \in A\). A nonassociative relation algebra \(\bf{A}\) is called a \emph{semiassociative relation algebra} iff \(\bf{A}\) satisfies \((x \cdot 1) \cdot 1 \approx x \cdot 1\). A  nonassociative relation algebra is called \emph{reflexive} iff \(x \le x\cdot x\), for all \(x \in A\). A nonassociative relation algebra \(\bf{A}\) is called \emph{symmetric} iff \(\bf{A}\) satisfies \(x\b \approx x\). A nonassociative relation algebra \(\bf{A}\) is called \emph{subadditive} iff \(x \cdot (x' \m y) \le x \j y\), for all \(x,y \in A\).
\end{definition}

The triangle laws are equivalent to an equation modulo the other axioms of nonassociative relation algebras (see Theorem 1.4 of \cite{varcontRA}), so the classes of all nonassociative relation algebras,  semissociative relation algebras, and reflexive subadditive symmetric semiassociative relation algebras are varieties.

\begin{notation}
The subvariety lattices of the varieties of nonassociative relation algebras,
semiassociative relation algebras and reflexive subadditive symmetric
semiassociative algebras will be denoted by
\(\boldsymbol{\Lambda}_\textup{NA}\), \(\boldsymbol{\Lambda}_\textup{SA}\) and
\(\boldsymbol{\Lambda}_\textup{RSA}\), respectively.
\end{notation}

\subsection{Tense algebras}\label{subsec:tense}

For an introduction to the theory and history of tense (or temporal) algebras, we refer the reader to Kracht \cite{kracht} and Blackburn, de Rijke, and Venema \cite{modal}. We refer the reader to Hirsch and Hodkinson  \cite{rabg} and Maddux \cite{madduxRA} for an introduction to the theory of Boolean algebras with conjugated operators; J\'onsson and Tarski \cite{bao1} is a shorter introduction. We will need the following definitions from \cite{tot}. 

\begin{definition}
An algebra \(\bf{A} = \inn{A; \j, \m,{^\prime},f,g,0,1}\) is called a \emph{tense algebra} iff \(\inn{A; \j, \m, {^\prime}, 0, 1}\) is a Boolean algebra, and \(f\) and \(g\) are a conjugate pair, i.e.,
\[
f(x) \m y = 0 \iff x \m g(y) = 0,
\]
for all \(x,y \in A\). A tense algebra \(\bf{A}\) is called \emph{total} iff \(f(x) \j g(x) = 1\), for all \(x \in A\) with \(x \neq 0\).
\end{definition}

A concrete example of a tense algebra is the complex algebra of a frame, i.e., a directed graph. Recall that the \emph{complex algebra} of a frame \(\inn{U;R}\) is the algebra \(\Cm(\inn{U;R}) \deq \inn{ \pw(U); \cup, \cap, {^c}, f_R, g_R, \varnothing, U}\), where \(\pw(U)\), \({^c}\), \(f_R\), and \(g_R\) respectively denote the powerset of \(U\), complementation relative to \(U\), the image operation defined by \(R\), and the preimage operation defined by \(R\). Thus, we have
\begin{align*}
f_R(X) &= \{u \in U \mid (x,u) \in R, \trm{ for some } x \in X \},\\
g_R(X) &= \{u \in U \mid (u,x) \in R, \trm{ for some } x \in X \},
\end{align*}
for all \(X \subseteq U\).

The fact that \(f\) and \(g\) form a conjugate pair can be expressed by equations (see Theorem 1.15 of \cite{bao1}), so the class of all tense algebras is a variety. The class of all total tense algebras is not a variety, but the variety it generates is finitely based (see Jipsen \cite{jipdisc}).

\begin{notation}
The subvariety lattice of the variety of tense algebras will be denoted by \(\boldsymbol{\Lambda}_\trm{TA}\). The subvariety lattice of the variety generated by the class of all total tense algebras will be denoted by \(\boldsymbol{\Lambda}_\trm{TTA}\).
\end{notation}

The variety of reflexive subadditive symmetric semiassociative \(r\)-algebras (reflexive subadditive symmetric semiassociative relation algebras without the requirement of an identity element) is known to be term equivalent to \(\boldsymbol{\Lambda}_\trm{TTA}\) (see Theorem 7 of \cite{tot}). This observation can be used to establish the following result (see Section 4 of \cite{tot}). Here we use \(\Var(\bf{A})\) to denote variety generated by an algebra \(\bf{A}\).

\begin{proposition}\label{covprop}
\(\Var(\bf{A}_3)\) has at least as many covers \textup{(}in \(\boldsymbol{\Lambda}_\textup{RSA}\)\textup{)} as \(\Var(\bf{T}_0)\) \textup{(}in \(\boldsymbol{\Lambda}_\textup{TTA}\)\textup{)}.
\end{proposition}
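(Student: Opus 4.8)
The plan is to push the entire question across the term equivalence of Theorem~7 of \cite{tot} and then interpolate between ``$r$-algebras'' and ``relation algebras'' by adjoining an identity element. Write $\c{R}$ for the variety of reflexive subadditive symmetric semiassociative $r$-algebras and $\mathsf{RSA}$ for the variety whose subvariety lattice is $\boldsymbol{\Lambda}_\textup{RSA}$. Since $\c{R}$ is term equivalent to the variety generated by the total tense algebras, their subvariety lattices are isomorphic by a map carrying $\Var(\bf{A})$ to the variety generated by the term-equivalent copy of $\bf{A}$; in particular it preserves join-irreducibility and the covering relation. The two-element tense algebra $\bf{T}_0$ corresponds to the unique two-element algebra $\bf{D}\in\c{R}$, so the covers of $\Var(\bf{T}_0)$ in $\boldsymbol{\Lambda}_\textup{TTA}$ are in bijection with the covers of $\Var(\bf{D})$ in the subvariety lattice of $\c{R}$. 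It therefore suffices to produce an injection from the covers of $\Var(\bf{D})$ into the covers of $\Var(\bf{A}_3)$ in $\boldsymbol{\Lambda}_\textup{RSA}$. Note that, since every nontrivial member of $\c{R}$ has a subalgebra isomorphic to $\bf{D}$ (reflexivity forces $1\cdot 1\approx 1$, so $\{0,1\}$ is a subalgebra), $\Var(\bf{D})$ is the unique atom of the subvariety lattice of $\c{R}$, and hence every cover of $\Var(\bf{D})$ is of the form $\Var(\bf{E})$ for a subdirectly irreducible $\bf{E}$.

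The interpolation uses two constructions. Upwards, to a subdirectly irreducible $\bf{B}\in\c{R}$ attach the relation algebra $\bf{B}^\dagger$ obtained by adjoining a new identity atom $e$: the Boolean reduct is $\bf{B}\times\bf{2}$, composition on the ``diversity part'' is inherited from $\bf{B}$, and the identity part of a product of two diversity elements is $e$ precisely when they are not disjoint. Following Section~4 of \cite{tot} one checks that $\bf{B}^\dagger$ is again a reflexive subadditive symmetric semiassociative relation algebra, that $\bf{B}\mapsto\bf{B}^\dagger$ preserves embeddings, that $\bf{B}^\dagger$ is subdirectly irreducible with $e$ an atom, and that $\bf{D}^\dagger\cong\bf{A}_3$. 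Downwards, relativization to the diversity element $e'$ sends any $\bf{A}\in\mathsf{RSA}$ to an $r$-algebra $\bf{A}^-\in\c{R}$ with $(\bf{B}^\dagger)^-\cong\bf{B}$; being relativization to a uniformly term-defined element, it commutes with $\mathsf{H}$, $\mathsf{S}$, $\mathsf{P}$ and $\mathsf{P_U}$. Hence $\mathcal{V}\mapsto\Psi(\mathcal{V})\deq\Var\{\bf{A}^-:\bf{A}\in\mathcal{V}\}$ is an order-preserving map from $\boldsymbol{\Lambda}_\textup{RSA}$ to the subvariety lattice of $\c{R}$ with $\Psi(\Var(\bf{A}_3))=\Var(\bf{D})$ and $\Psi(\Var(\bf{B}^\dagger))=\Var(\bf{B})$.

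Now fix a cover $\mathcal{C}=\Var(\bf{E})$ of $\Var(\bf{D})$, with $\bf{E}$ subdirectly irreducible, and send it to $\Var(\bf{E}^\dagger)$. Then $\Var(\bf{A}_3)=\Var(\bf{D}^\dagger)\subseteq\Var(\bf{E}^\dagger)$ since $\bf{D}\le\bf{E}$ and $\dagger$ preserves embeddings; the inclusion is proper, and the assignment is injective, because $\Psi$ recovers $\Var(\bf{E})$ from $\Var(\bf{E}^\dagger)$. For the covering property, suppose $\Var(\bf{A}_3)\subsetneq\mathcal{V}\subsetneq\Var(\bf{E}^\dagger)$. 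Every subdirectly irreducible member of $\mathcal{V}$ lies in $\mathsf{HSP_U}(\bf{E}^\dagger)$, where $e$ is an atom --- a property preserved by $\mathsf{H}$, $\mathsf{S}$ and $\mathsf{P_U}$ --- so it is integral, hence simple and equal to $\bf{B}^\dagger$ for some $\bf{B}\in\Var(\bf{E})$; thus $\mathcal{V}$ is determined by $\Psi(\mathcal{V})$, and $\Var(\bf{D})\subseteq\Psi(\mathcal{V})\subseteq\Var(\bf{E})$. If $\Psi(\mathcal{V})=\Var(\bf{D})$ then, using that the only subdirectly irreducible members of $\Var(\bf{D})$ are $\bf{D}$ and the trivial algebra and that $\bf{2}\notin\Var(\bf{E}^\dagger)$ (since $\Var(\bf{E}^\dagger)\models e'\cdot e'\approx 1$, as $\bf{E}$ is reflexive), every subdirectly irreducible member of $\mathcal{V}$ is isomorphic to $\bf{A}_3$, so $\mathcal{V}=\Var(\bf{A}_3)$, a contradiction. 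Hence $\Psi(\mathcal{V})=\Var(\bf{E})$; since $\Var(\bf{E})$ covers $\Var(\bf{D})$, some $\bf{A}\in\mathcal{V}$ has $\Var(\bf{A}^-)=\Var(\bf{E})$, and pushing $\bf{E}\in\mathsf{HSP}(\bf{A}^-)$ through $\dagger$ yields $\bf{E}^\dagger\in\mathcal{V}$, so $\mathcal{V}=\Var(\bf{E}^\dagger)$. Thus $\Var(\bf{E}^\dagger)$ covers $\Var(\bf{A}_3)$, and distinct covers of $\Var(\bf{D})$ give distinct covers of $\Var(\bf{A}_3)$.

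The substantive work --- and the main obstacle --- is controlling the identity adjunction against the class operators. One must first pin down the correct construction and the precise class of $r$-algebras on which it returns a semiassociative relation algebra, and confirm that the subdirectly irreducible generators of covers of $\Var(\bf{D})$ fall in that class; then verify the relation-algebra axioms (triangle laws and semiassociativity) together with reflexivity, subadditivity and symmetry for $\bf{B}^\dagger$; and, most delicately, justify the final implication above --- lifting $\bf{E}\in\mathsf{HSP}(\bf{A}^-)$ to $\bf{E}^\dagger\in\mathsf{HSP}(\bf{A})$ --- since $\dagger$ does not commute with direct products and the argument must route through J\'onsson's lemma and the simplicity of the algebras involved.
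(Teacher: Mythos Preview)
Your proposal is correct and follows precisely the route the paper intends: the paper gives no self-contained proof of this proposition but simply refers to Section~4 of \cite{tot}, and what you have written is a faithful reconstruction of that argument --- the term equivalence of Theorem~7 of \cite{tot}, the identity-adjunction $\bf{B}\mapsto\bf{B}^\dagger$, and the relativization $\bf{A}\mapsto\bf{A}^-$ as a one-sided inverse. The delicate points you flag (that $e$ being an atom is preserved under $\mathsf{H}$, $\mathsf{S}$, $\mathsf{P_U}$ and forces simplicity in the symmetric semiassociative setting, and that the lifting through $\dagger$ can be carried along $\mathsf{HSP_U}$ rather than $\mathsf{HSP}$) are exactly the verifications \cite{tot} performs, so there is no divergence in approach.
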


We conclude this section with some standard results that we use later. We call a binary relation \(R\) on a set \(U\) \emph{total} when \((x,y) \in R\) or \((y,x) \in R\), for all \(x,y \in U\); we do not require that \(x \neq y\), so total relations are reflexive. The following results can be found in Section 1 of \cite{tot} and Chapter 2 of \cite{jipphd}.

\begin{proposition}\label{tense}
\begin{enumerate}
\item
Let \(R\) be a total binary relation on a set \(U\). Then \(\Cm(\inn{U;R})\) is a total tense algebra.
\item
Let \(\bf{A}\) be a total tense algebra. Then \(\bf{A}\) is discriminator.
\end{enumerate}
\end{proposition}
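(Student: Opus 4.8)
The plan is to handle the two parts separately; both are routine verifications.

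For part (1), I would first recall the general fact that the complex algebra of any frame is a tense algebra: \(\inn{\pw(U);\cup,\cap,{^c},\varnothing,U}\) is a Boolean algebra, and \(f_R\) and \(g_R\) form a conjugate pair because, for all \(X,Y \subseteq U\), we have \(f_R(X) \cap Y \neq \varnothing\) iff there exist \(x \in X\) and \(u \in Y\) with \((x,u) \in R\) iff \(X \cap g_R(Y) \neq \varnothing\). It then remains to check totality. Given a nonempty \(X \subseteq U\), I would fix some \(x \in X\) and let \(u \in U\) be arbitrary; since \(R\) is total, \((x,u) \in R\) or \((u,x) \in R\), and in the former case \(u \in f_R(X)\) while in the latter \(u \in g_R(X)\). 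As \(u\) was arbitrary, \(f_R(X) \cup g_R(X) = U\), which is what totality of the tense algebra requires.

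For part (2), I would use the standard reduction: an algebra with a Boolean reduct is a discriminator algebra as soon as it admits a unary term \(\tau\) with \(\tau(0) = 0\) and \(\tau(a) = 1\) for all \(a \neq 0\), since then the ternary discriminator is realised by the Boolean term
\[
t(x,y,z) = \bigl(x \m \tau(d)\bigr) \j \bigl(z \m \tau(d)'\bigr), \qquad d \deq (x \m y') \j (x' \m y),
\]
using that the symmetric difference \(d\) equals \(0\) precisely when \(x = y\). I claim that \(\tau(x) \deq f(x) \j g(x)\) does the job in any total tense algebra \(\bf{A}\). First, conjugate operators are normal: from \(f(0) \m y = 0 \iff 0 \m g(y) = 0\) and the fact that the right-hand side always holds, one gets \(f(0) \m y = 0\) for all \(y\), so (taking \(y = f(0)\)) \(f(0) = 0\), and symmetrically \(g(0) = 0\); hence \(\tau(0) = 0\). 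Second, if \(a \neq 0\) then \(\tau(a) = f(a) \j g(a) = 1\) by totality. Therefore \(t\) is the ternary discriminator term on \(\bf{A}\).

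There is no genuine obstacle here — the only points that need a little care are recalling that conjugate operators are normal (which is what makes \(\tau\) vanish at \(0\)) and checking that the Boolean term \(t\) really does realise the discriminator.
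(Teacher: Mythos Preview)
Your argument is correct on both counts. Note, however, that the paper does not actually prove this proposition: it is stated as a known result and immediately attributed to Section~1 of \cite{tot} and Chapter~2 of \cite{jipphd}, with no in-text proof given. Your write-up is precisely the standard verification one finds in those sources --- complex algebras of frames are tense algebras with the image and preimage operations conjugate, totality of the relation forces \(f_R(X)\cup g_R(X)=U\) for nonempty \(X\), and the unary ``switching'' term \(\tau(x)=f(x)\vee g(x)\) (normal because conjugate operators are normal, equal to \(1\) on nonzero inputs by totality) yields the ternary discriminator via the usual Boolean combination --- so there is nothing to contrast, and your proof would serve as a self-contained replacement for the citations.
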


Lastly, we state some basic results on discriminator varieties; we refer the reader to Lemma 9.2 and Theorem 9.4 of \cite{BSUA}. Here we use \(\I\), \(\So\), and \(\U\) for the usual class operators of taking all isomorphic copies, subalgebras, and ultraproducts of a class of similar algebras, respectively.

\begin{proposition}\label{disc}
Let \(\c{K}\) be a class of similar non-trivial algebras with a common discriminator term. Then
\begin{enumerate}
\item
every element of \(\c{K}\) is simple,
\item
all directly indecomposable and subdirectly irreducible elements of \(\Var(\c{K})\) are simple,
\item
the class of simple (and therefore the class of subdirectly irreducible) elements of \(\Var(\c{K})\) is precisely \(\ISU(\c{K})\).
\end{enumerate}
\end{proposition}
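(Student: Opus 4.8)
The plan is to derive (1) from a direct congruence calculation, and then to obtain (2) and (3) from (1) together with the fact that \(\Var(\c{K})\) is arithmetical, J\'onsson's lemma, and a short structural argument in the directly indecomposable case. Write \(t\) for the common discriminator term, so that in each member of \(\c{K}\) we have \(t(a,b,c) = c\) if \(a = b\) and \(t(a,b,c) = a\) if \(a \neq b\). For (1): given \(\bf{A} \in \c{K}\) and a congruence \(\theta \neq \Delta\) of \(\bf{A}\), pick \((a,b) \in \theta\) with \(a \neq b\); compatibility of \(\theta\) with \(t\) applied to \((a,b)\) yields \(c = t^{\bf{A}}(a,a,c) \mathrel{\theta} t^{\bf{A}}(a,b,c) = a\) for all \(c \in A\), so \(\theta = \nabla\), and since \(\bf{A}\) is non-trivial it is simple. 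I would also note immediately that the Pixley identities \(t(x,x,y) \approx y\), \(t(x,y,y) \approx x\) and \(t(x,y,x) \approx x\) all hold in \(\c{K}\) (a two-case check), hence in \(\Var(\c{K})\); thus \(t\) is a Pixley term for \(\Var(\c{K})\), so this variety is arithmetical, and in particular congruence-distributive.

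Next I would observe that ``\(\bf{A}\) is non-trivial'' and ``\(t\) acts as the discriminator on \(\bf{A}\)'' are both expressible by first-order sentences that pass to ultraproducts and (in the settings of interest, where subalgebras of non-trivial algebras are non-trivial) to subalgebras; so, applying (1) to the class \(\So\U(\c{K})\), every member of \(\ISU(\c{K})\) is simple, and clearly \(\ISU(\c{K}) \subseteq \Var(\c{K})\). Conversely, let \(\bf{A}\) be a subdirectly irreducible member of \(\Var(\c{K})\). By congruence-distributivity and J\'onsson's lemma, \(\bf{A}\) is a homomorphic image of some \(\bf{B} \in \So\U(\c{K})\); but \(\bf{B}\) is simple, a homomorphic image of a simple algebra is simple or trivial, and \(\bf{A}\) is non-trivial, so \(\bf{A} \in \ISU(\c{K})\). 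Since simple algebras are subdirectly irreducible, this settles the subdirectly irreducible case of (2) and shows that the simple members of \(\Var(\c{K})\), its subdirectly irreducible members, and \(\ISU(\c{K})\) coincide, which is (3).

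For the directly indecomposable case of (2), let \(\bf{A} \in \Var(\c{K})\) be directly indecomposable. By the subdirectly irreducible case of (2) and Birkhoff's theorem, \(\bf{A}\) is a subdirect product of simple algebras \(\bf{S}_i\) (\(i \in I\)), and since \(t\) is the discriminator on each \(\bf{S}_i\), the operation \(t^{\bf{A}}\) acts coordinatewise as the discriminator. I would then introduce the switching term \(s(x,y,u,v) \deq t(t(x,y,u), t(x,y,v), v)\), and check that \(s(x,x,u,v) \approx u\) holds in \(\Var(\c{K})\) while \(s(a,b,u,v) = v\) whenever \(a \neq b\) in a simple algebra. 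Fixing \(a, b \in A\) and putting \(J \deq \{ i \in I : a_i \neq b_i \}\), the term \(s\) shows that \(\bf{A}\) is closed under ``patching'' one element on \(J\) with another on \(I \setminus J\); hence, writing \(\eta_K\) for the kernel of the projection \(\bf{A} \to \prod_{i \in K} \bf{S}_i\), the congruences \(\eta_J\) and \(\eta_{I \setminus J}\) permute, meet to \(\Delta\), and join to \(\nabla\), so they are complementary factor congruences and \(\bf{A} \cong \bf{A}/\eta_J \times \bf{A}/\eta_{I \setminus J}\). Direct indecomposability forces one factor to be trivial; since the \(\bf{S}_i\) are non-trivial and \(\bf{A}\) is subdirect, this forces \(J = I\) or \(a = b\). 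Therefore distinct elements of \(A\) differ in every coordinate, and we conclude as in (1): if \(\theta > \Delta\) and \((a,b) \in \theta\) with \(a \neq b\), then \(a_i \neq b_i\) for all \(i\), whence \(t^{\bf{A}}(a,b,c) = a\) and \(t^{\bf{A}}(a,a,c) = c\) for every \(c\), so \(\theta = \nabla\) and \(\bf{A}\) is simple.

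I expect the directly indecomposable case to be the only real obstacle. Parts (1) and (3) and the subdirectly irreducible case of (2) come out quickly once one notices that the discriminator term is itself a Pixley term for the variety it generates; the directly indecomposable case, by contrast, genuinely needs the switching term together with the observation that it turns any proper splitting of the index set of a subdirect product of simple algebras into a bona fide direct-product decomposition.
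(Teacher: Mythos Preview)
The paper does not prove this proposition at all: it simply records it as a standard fact and refers the reader to Lemma~9.2 and Theorem~9.4 of Burris and Sankappanavar. So there is no ``paper's proof'' to compare against beyond that citation.

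Your argument is correct and is essentially the textbook proof. Part~(1) is the usual one-line congruence calculation; the Pixley identities make \(\Var(\c{K})\) arithmetical and hence congruence-distributive; J\'onsson's lemma together with simplicity of every member of \(\So\U(\c{K})\) yields~(3) and the subdirectly irreducible half of~(2); and the switching term \(s(x,y,u,v)=t(t(x,y,u),t(x,y,v),v)\) applied to a subdirect representation by simple algebras shows that any two distinct elements of a directly indecomposable member differ in every coordinate, after which~(1) finishes the directly indecomposable case. All the computations check out.

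The one point worth making explicit is exactly the one you flagged parenthetically: statement~(3), read literally, needs every member of \(\ISU(\c{K})\) to be non-trivial, which in a bare signature can fail. In the paper's setting this is automatic, since all algebras under consideration are Boolean algebras with operators, the constants \(0\) and \(1\) lie in every subalgebra, and \(0\neq 1\) passes to ultraproducts by \L o\'s; so \(\ISU(\c{K})\) contains no trivial algebras and your hedge is harmless here.
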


\section{Uncountable collections of varieties}\label{sec:vars}

In this section we will construct continuum many varieties that cover \(\Var(\bf{T}_0)\) in \(\boldsymbol{\Lambda}_{\trm{TTA}}\) and hence show that \(\Var(\bf{A}_3)\) has continuum many covers in \(\boldsymbol{\Lambda}_{\trm{RSA}}\). These varieties are generated by the complex algebras of an uncountable collection of frames with a very strong resemblance to the recession frames that were defined by Blok in \cite{blok} and by Jipsen, Kramer, and Maddux in \cite{tot}. Roughly speaking, in the terminology of \cite{tot}, these frames are \(\omega\)-recession frames with minor modifications that are combined like a \(\bb{Z}\)-recession frame. Before we make this more explicit, we will need to introduce some notation.

\begin{notation}
Let \(\bb{E}\) and \(\bb{O}\) denote \(\{2n \mid n \in \bb{N}\}\) and \(\{2n+1 \mid n \in \bb{N}\}\), respectively (where \(\bb{N} \deq \{p \in \mathbb{Z} \mid p > 0\}\)). For each \(S \subseteq \bb{O}\), let \(S_\bb{E} \deq S \cup \bb{E}\).
\end{notation}

Now we can define the frames we will be working with. We use \(a_{p,m}\) for an arbitrary vertex (or node) because vertices correspond to atoms of complex algebras, and to match the notation used in \cite{tot}.

\begin{definition}\label{defn}
Let \(a_{p,m}\) be a (distinct) vertex, for each \(p \in \mathbb{Z}\) and \(m \in \mathbb{N}\). Define \(V \deq \{a_{p,m} \mid p \in \bb{Z}, m \in \bb{N}\}\). For each \(S \subseteq \bb{O}\), let
\begin{align*}
R_S \deq  \phantom{} & \{(a_{p,m}, a_{q,n}) \mid  p > q \trm{ or } (p=q \trm{ and } m \ge n) \} \\
 & \cup \{(a_{p,1}, a_{p+1,m}) \mid p \in \bb{Z},  m \in S_\bb{E} \}\\
 & \cup \{(a_{p,m}, a_{p,m+1}) \mid p \in \bb{Z}, m \in \bb{N}\}
\end{align*}
and let \(\bf{F}_S \deq \Cm(\inn{V;R_S})\).
\end{definition}

As is usually the case with frames, a diagram is easier to work with than a written description. Thus, we will usually refer to Figure \ref{fig}. 

\begin{figure}[h]
\begin{tikzpicture}[scale=4/3]
\draw[thick, directed] (4,0) -- (1,1.5);
\draw[thick, directed, dashed] (4,0) -- (2,1.5);
\draw[thick, directed, dashed] (4,0) -- (0,1.5);
\draw[thick, directed] (4,0) -- (3,1.5);
\draw[thick, directed] (4,1.5) -- (1,3);
\draw[thick, directed, dashed] (4,1.5) -- (2,3);
\draw[thick, directed, dashed] (4,1.5) -- (0,3);
\draw[thick, directed] (4,1.5) -- (3,3);
\draw[thick, directed] (4,-1.5) -- (1,0);
\draw[thick, directed, dashed] (4,-1.5) -- (2,0);
\draw[thick, directed, dashed] (4,-1.5) -- (0,0);
\draw[thick, directed] (4,-1.5) -- (3,0);
\draw[thick, directed] (4,-3) -- (1,-1.5);
\draw[thick, directed, dashed] (4,-3) -- (2,-1.5);
\draw[thick, directed, dashed] (4,-3) -- (0,-1.5);
\draw[thick, directed] (4,-3) -- (3,-1.5);
\draw[thick, directed] (1,0) -- (0,0);
\draw[thick, directed] (2,0) -- (1,0);
\draw[thick, directed] (3,0) -- (2,0);
\draw[thick, directed] (4,0) -- (3,0);
\draw[thick, directed] (1,1.5) -- (0,1.5);
\draw[thick, directed] (2,1.5) -- (1,1.5);
\draw[thick, directed] (3,1.5) -- (2,1.5);
\draw[thick, directed] (4,1.5) -- (3,1.5);
\draw[thick, directed] (2,3) -- (1,3);
\draw[thick, directed] (3,3) -- (2,3);
\draw[thick, directed] (4,3) -- (3,3);
\draw[thick, directed] (1,3) -- (0,3);
\draw[thick, directed] (1,-1.5) -- (0,-1.5);
\draw[thick, directed] (2,-1.5) -- (1,-1.5);
\draw[thick, directed] (3,-1.5) -- (2,-1.5);
\draw[thick, directed] (4,-1.5) -- (3,-1.5);
\draw[thick, directed] (1,-3) -- (0,-3);
\draw[thick, directed] (2,-3) -- (1,-3);
\draw[thick, directed] (3,-3) -- (2,-3);
\draw[thick, directed] (4,-3) -- (3,-3);
\draw (-0.5,0) node {\(\cdots\)};
\draw (-0.5,1.5) node {\(\cdots\)};
\draw (-0.5,3) node {\(\cdots\)};
\draw (-0.5,-1.5) node {\(\cdots\)};
\draw (-0.5,-3) node {\(\cdots\)};
\draw[thick, fill=white] (1,0) circle (0.05);
\draw[thick, fill=white] (2,0) circle (0.05);
\draw[thick, fill=white] (3,0) circle (0.05);
\draw[thick, fill=white] (4,0) circle (0.05);
\draw[thick, fill=white] (0,0) circle (0.05);
\draw[thick, fill=white] (1,1.5) circle (0.05);
\draw[thick, fill=white] (2,1.5) circle (0.05);
\draw[thick, fill=white] (3,1.5) circle (0.05);
\draw[thick, fill=white] (4,1.5) circle (0.05);
\draw[thick, fill=white] (0,1.5) circle (0.05);
\draw[thick, fill=white] (1,3) circle (0.05);
\draw[thick, fill=white] (2,3) circle (0.05);
\draw[thick, fill=white] (3,3) circle (0.05);
\draw[thick, fill=white] (4,3) circle (0.05);
\draw[thick, fill=white] (0,3) circle (0.05);
\draw[thick, fill=white] (1,-1.5) circle (0.05);
\draw[thick, fill=white] (2,-1.5) circle (0.05);
\draw[thick, fill=white] (3,-1.5) circle (0.05);
\draw[thick, fill=white] (4,-1.5) circle (0.05);
\draw[thick, fill=white] (0,-1.5) circle (0.05);
\draw[thick, fill=white] (1,-3) circle (0.05);
\draw[thick, fill=white] (2,-3) circle (0.05);
\draw[thick, fill=white] (3,-3) circle (0.05);
\draw[thick, fill=white] (4,-3) circle (0.05);
\draw[thick, fill=white] (0,-3) circle (0.05);
\draw[above] (0,0) node {\(a_{0,5}\)};
\draw[above] (1,0) node {\(a_{0,4}\)};
\draw[above] (2,0) node {\(a_{0,3}\)};
\draw[above] (3,0) node {\(a_{0,2}\)};
\draw[above right] (3.85,0) node {\(a_{0,1}\)};
\draw[above] (0,1.5) node {\(a_{1,5}\)};
\draw[above] (1,1.5) node {\(a_{1,4}\)};
\draw[above] (2,1.5) node {\(a_{1,3}\)};
\draw[above] (3,1.5) node {\(a_{1,2}\)};
\draw[above right] (3.85,1.5) node {\(a_{1,1}\)};
\draw[above] (0,3) node {\(a_{2,5}\)};
\draw[above] (1,3) node {\(a_{2,4}\)};
\draw[above] (2,3) node {\(a_{2,3}\)};
\draw[above] (3,3) node {\(a_{2,2}\)};
\draw[above] (4,3) node {\(a_{2,1}\)};
\draw[above] (0,-1.5) node {\(a_{-1,5}\)};
\draw[above] (1,-1.5) node {\(a_{-1,4}\)};
\draw[above] (2,-1.5) node {\(a_{-1,3}\)};
\draw[above] (3,-1.5) node {\(a_{-1,2}\)};
\draw[above right] (3.85,-1.5) node {\(a_{-1,1}\)};
\draw[above] (0,-3) node {\(a_{-2,5}\)};
\draw[above] (1,-3) node {\(a_{-2,4}\)};
\draw[above] (2,-3) node {\(a_{-2,3}\)};
\draw[above] (3,-3) node {\(a_{-2,2}\)};
\draw[above right] (3.85,-3) node {\(a_{-2,1}\)};
\draw (0,3.7) node {\(\vdots\)};
\draw (1,3.7) node {\(\vdots\)};
\draw (2,3.7) node {\(\vdots\)};
\draw (3,3.7) node {\(\vdots\)};
\draw (4,3.7) node {\(\vdots\)};
\draw (0,-3.4) node {\(\vdots\)};
\draw (1,-3.4) node {\(\vdots\)};
\draw (2,-3.4) node {\(\vdots\)};
\draw (3,-3.4) node {\(\vdots\)};
\draw (4,-3.4) node {\(\vdots\)};
\draw[ultra thick, -to] (4.5,0) -- (4.85,0);
\draw[ultra thick, -to] (4.5,1.5) -- (4.85,1.5);
\draw[ultra thick, -to] (4.5,3) -- (4.85,3);
\draw[ultra thick, -to] (4.5,-1.5) -- (4.85,-1.5);
\draw[ultra thick, -to] (4.5,-3) -- (4.85,-3);
\draw[ultra thick, -to] (-1,3.735) -- (-1,-3.6);
\end{tikzpicture}
\caption{A graph drawing of \(\inn{V;R_S}\).}
\label{fig}
\end{figure}

This graph drawing uses some conventions from \cite{tot}. To reduce clutter, we exclude loops, as there are loops at every vertex.   The thick vertical arrow indicates that a vertex points at each vertex below it. For example, \(a_{1,1}\) points at \(a_{0,1}\), \(a_{0,2}\), and \(a_{-1,1}\). The thick horizontal arrows indicate that a vertex points at each vertex to its right. For example, \(a_{1,3}\) points at \(a_{1,2}\) and \(a_{1,1}\). Dashed arrows indicate edges whose inclusion depends on the choice of \(S\). For example, \(a_{0,1}\) points at \(a_{1,3}\) and \(a_{1,1}\) points at \(a_{2,3}\) and \(a_{2,3}\) when \(3 \in S\), but \(a_{0,1}\) and \(a_{1,1}\) do not point at \(a_{1,3}\) and \(a_{2,3}\), respectively, when \(3 \notin S\).

For another example, we will list all of the vertices that \(a_{0,1}\) points at. When \(m \in \mathbb{N}\) is even, \(a_{0,1}\) always points at \(a_{1,m}\). If \(m \in \bb{O}\), then \(a_{0,1}\) points at \(a_{1,m}\) when \(m \in S\). If \(p < 0\) and \(m \in \mathbb{N}\), then \(a_{0,1}\) always points at \(a_{p,m}\). Lastly, \(a_{0,1}\) always points at \(a_{0,1}\) and \(a_{0,2}\).

Firstly, we will need to check that these relations are total.

\begin{lemma}\label{reftot}
Let \(S \subseteq \bb{O}\). Then \(R_S\) is total.
\end{lemma}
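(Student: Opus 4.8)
The plan is simply to unwind the definition of ``total'' and observe that the first of the three sets in Definition~\ref{defn} already does all of the work. By definition we must show that for all \(x, y \in V\) we have \((x, y) \in R_S\) or \((y, x) \in R_S\). Since every vertex of \(V\) is of the form \(a_{p,m}\) for a unique pair \((p,m) \in \bb{Z} \times \bb{N}\), it suffices to fix \(p, q \in \bb{Z}\) and \(m, n \in \bb{N}\) and show that \((a_{p,m}, a_{q,n}) \in R_S\) or \((a_{q,n}, a_{p,m}) \in R_S\).

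The next step is a case split according to how \(p\) and \(q\) compare under the (linear) order on \(\bb{Z}\). If \(p > q\), then \((a_{p,m}, a_{q,n})\) belongs to the first set in the definition of \(R_S\), so \((a_{p,m}, a_{q,n}) \in R_S\); symmetrically, if \(q > p\), then \((a_{q,n}, a_{p,m}) \in R_S\). In the remaining case \(p = q\), the linear order on \(\bb{N}\) gives \(m \ge n\) or \(n \ge m\); in the former case \((a_{p,m}, a_{q,n})\) lies in the first set defining \(R_S\), and in the latter case \((a_{q,n}, a_{p,m})\) does. In every case one of the two ordered pairs lies in \(R_S\), so \(R_S\) is total. (Note this also covers \(x = y\), i.e.\ \(p = q\) and \(m = n\), consistently with the convention that total relations are reflexive.)

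I do not expect any real obstacle: the lemma is immediate once one notices that the first set in Definition~\ref{defn} is, by itself, a total preorder on \(V\) --- it is the lexicographic-style relation comparing first coordinates in \(\bb{Z}\) and breaking ties via \(\ge\) on \(\bb{N}\) --- and that adjoining further pairs (the second and third sets) to a total relation leaves it total. In particular the choice of \(S \subseteq \bb{O}\) is irrelevant here; the dashed edges and the vertical successor edges of Figure~\ref{fig} play no role in this statement, and will matter only for the finer analysis of \(\bf{F}_S\) carried out later.
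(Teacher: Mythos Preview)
Your proof is correct and follows essentially the same route as the paper: both arguments rely solely on the first set in the definition of \(R_S\) and perform the obvious case split on the trichotomy for \(p\) versus \(q\), falling back on the comparison of \(m\) and \(n\) when \(p=q\). The only cosmetic difference is that the paper first verifies reflexivity separately and then treats the case \(a_{p,m}\neq a_{q,n}\), whereas you fold reflexivity into the \(p=q\), \(m\ge n\) subcase.
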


\begin{proof}
Let \(p,q \in \bb{Z}\) and let \(m,n \in \bb{N}\). As \(m \ge m\), we have \((a_{p,m}, a_{p,m}) \in R_S\), so \(R_S\) is reflexive. Assume that \(a_{p,m} \neq a_{q,n}\). If \(p = q\), we must have \(m \neq n\), hence \(m> n\) or \(m < n\), so \((a_{p,m}, a_{q,n}) \in R_S\) or \((a_{q,n}, a_{p,m}) \in R_S\). Similarly, when \(p \neq q\), we have \(p > q\) or \(p < q\), so  \((a_{p,m}, a_{q,n}) \in R_S\) or \((a_{q,n}, a_{p,m}) \in R_S\). Combining these results, we find that \(R\) is total, which is what we wanted. \end{proof}

Using Proposition \ref{tense} and Proposition \ref{disc}(1), we get the following result.
 
\begin{corollary}\label{ourdisc}
Let \(S \subseteq \bb{O}\). Then every element of \(\IS(\bf{F}_S)\) is simple.
\end{corollary}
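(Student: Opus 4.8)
The plan is to chain together the two parts of Proposition \ref{tense} with Proposition \ref{disc}(1). First I would invoke Lemma \ref{reftot} to see that \(R_S\) is total, and then apply Proposition \ref{tense}(1) to conclude that \(\bf{F}_S = \Cm(\inn{V;R_S})\) is a total tense algebra. By Proposition \ref{tense}(2), \(\bf{F}_S\) is therefore a discriminator algebra, so there is a term \(t\) in the language of tense algebras that realises the ternary discriminator on \(\bf{F}_S\).

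Next I would note that \(t\) continues to realise the discriminator on every subalgebra of \(\bf{F}_S\): term operations commute with the inclusion map of a subalgebra, so for any \(\bf{B} \le \bf{F}_S\) the operation \(t^{\bf{B}}\) is the restriction of \(t^{\bf{F}_S}\), and the equations defining the discriminator are inherited; the same holds trivially for isomorphic copies. Thus every member of \(\IS(\bf{F}_S)\) is a discriminator algebra, all with the common discriminator term \(t\). For the remaining hypothesis of Proposition \ref{disc}(1), observe that every subalgebra of \(\bf{F}_S\) contains the distinguished elements \(0 = \varnothing\) and \(1 = V\), which are distinct since \(V \neq \varnothing\); hence every element of \(\IS(\bf{F}_S)\) is non-trivial. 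Applying Proposition \ref{disc}(1) with \(\c{K} \deq \IS(\bf{F}_S)\) then gives that every element of \(\IS(\bf{F}_S)\) is simple.

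There is no genuine obstacle here, as this is essentially a bookkeeping corollary; the only point that warrants a moment's attention is the passage from ``\(\bf{F}_S\) has a discriminator term'' to ``\(\IS(\bf{F}_S)\) has a common discriminator term'', since it is the latter, stronger, statement that lets us feed the whole class — rather than just the single algebra \(\bf{F}_S\) — into Proposition \ref{disc}(1).
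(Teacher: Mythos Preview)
Your argument is correct and follows exactly the route the paper indicates: the paper derives the corollary from Lemma~\ref{reftot}, Proposition~\ref{tense}, and Proposition~\ref{disc}(1), and you have simply unpacked those references with the appropriate remarks about preservation of the discriminator term under \(\IS\) and non-triviality. There is nothing to add.
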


The full complex algebras of these frames are too big for our purposes; we will instead work with the subalgebras generated by all of their atoms. Due to the difference in structure between our frames and the frames in \cite{tot}, these algebras will be somewhat more difficult to describe explicitly than the  algebras in \cite{tot}. However, these explicit descriptions are more convenient to work with, so we will use them to define the algebras we will be working with.

\begin{definition}
For all \(p \in \bb{Z}\), \(m \in \bb{N}\), and \(S \subseteq \bb{O}\), let \(V_p \deq \{ a_{p,n} \mid n \in \bb{N}\}\), \(A_{p,m} \deq \{a_{p,m}\}\), \(D_p \deq \{a_{q,n} \mid  q \le p, n \in \bb{N}\}\), \(U_p \deq \{a_{q,n} \mid  q \ge p, n \in \bb{N} \}\), \(S_{p,m} \deq \{ a_{p,n} \mid   n  \in S_\bb{E}, n \ge m \}\), and \(\bar{S}_{p,m} \deq  \{ a_{p,n} \mid  n \notin S_\bb{E}, n>1, n \ge m\}\). Now, for each \(S \subseteq \bb{O}\), let
\[
\s{S}_S \deq \{A_{p,m}, S_{p,m}, \bar{S}_{p,m}  \mid p \in \bb{Z}, m \in \bb{N}\} \cup \{D_p, U_p \mid p \in \bb{Z}\}
\]
and let \(\s{B}_S\) be the set of finite unions of elements of \(\s{S}_S\).
\end{definition}

We aim to show that \(\s{B}_S\) is a subuniverse of \(\bf{F}_S\), for all \(S \subseteq \bb{O}\). Firstly, we will describe the action of the operators of the elements of \(\s{S}_S\). To avoid double subscripts, we write \(f_S\) and \(g_S\) in place of \(f_{R_S}\) and \(g_{R_S}\), respectively.

\begin{lemma}\label{fg}
Let \(S \subseteq \bb{O}\), let \(p \in \bb{Z}\), let \(m \in \bb{N}\), and define \(T \deq \{n \in \bb{N} \setminus S_\bb{E} \mid n \ge m\}\). Then
\begin{enumerate}
\item
\(f_S(A_{p,1}) = A_{p,1} \cup A_{p,2} \cup D_{p-1} \cup S_{p+1,1}\),
\item
\( f_S(A_{p,m}) = \bigcup\{A_{p,n} \mid  n \le m+1\} \cup D_{p-1}\) if  \(m > 1\),
\item
\(f_S(D_p) = D_p \cup S_{p+1,1}\),
\item
\(f_S(U_p) = V\),
\item
\(f_S(S_{p,m}) = D_p\),
\item
\(f_S(\bar{S}_{p,m}) =  \varnothing\)  if \( T = \varnothing\),
\item
\(f_S(\bar{S}_{p,m}) = \bigcup \{ A_{p,n} \mid n \le \max(T)+1\} \cup D_{p-1}\) if  \(T \neq \varnothing\)  is finite,
\item
\(f_S(\bar{S}_{p,m}) =  D_p\) if \(T\) is infinite,
\item
\(g_S(A_{p,1}) = U_p\),
\item
\(g_S(A_{p,2}) = A_{p-1,1} \cup U_p\),
\item
\(g_S(A_{p,m}) = U_{p+1} \cup S_{p,m-1} \cup \bar{S}_{p,m-1}\) if \(m > 1\) and \(m \notin S_\bb{E}\),
\item
\(g_S(A_{p,m}) = A_{p-1,1} \cup U_{p+1} \cup S_{p,m-1} \cup \bar{S}_{p,m-1}\) if \(m>2\) and  \(m \in S_\bb{E}\),
\item
\(g_S(D_p) = V\),
\item
\(g_S(U_p) = A_{p-1,1} \cup U_p\),
\item
\(g_S(S_{p,m}) = A_{p-1,1} \cup U_p\),
\item
\(g_S(\bar{S}_{p,m}) = \varnothing \) if \(T = \varnothing\), 
\item
\(g_S(\bar{S}_{p,m}) = S_{p,\min(T)-1} \cup \bar{S}_{p,\min(T)-1} \cup U_{p+1}\) if  \(T \neq \varnothing\).
\end{enumerate}
\end{lemma}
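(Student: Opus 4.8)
First I would note that, since \(f_S = f_{R_S}\) and \(g_S = g_{R_S}\) are completely additive (immediate from their definitions as image and preimage operators), and since each of \(D_p\), \(U_p\), \(S_{p,m}\), and \(\bar{S}_{p,m}\) is a union of singletons \(A_{q,n} = \{a_{q,n}\}\), it suffices to (a) compute \(f_S(A_{p,m})\) and \(g_S(A_{p,m})\) for an arbitrary vertex \(a_{p,m}\), and then (b) push these through the (possibly infinite) unions and simplify. So the plan is: first do all the singleton computations, then assemble the block cases from them.

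For (a), \(f_S(A_{p,m})\) is the set of \(R_S\)-successors of \(a_{p,m}\), read off the three clauses defining \(R_S\): the first clause contributes \(D_{p-1}\) together with \(\{a_{p,n} \mid n \le m\}\), the third contributes \(a_{p,m+1}\), and the second contributes \(S_{p+1,1}\) but only when \(m = 1\). This gives (1) and (2), using \(\{a_{p,n} \mid n \le m\} \cup \{a_{p,m+1}\} = \{a_{p,n} \mid n \le m+1\}\). Dually, \(g_S(A_{p,m})\) is the set of \(R_S\)-predecessors of \(a_{p,m}\): the first clause contributes \(U_{p+1}\) together with \(\{a_{p,n} \mid n \ge m\}\), the third contributes \(a_{p,m-1}\) when \(m > 1\), and the second contributes \(a_{p-1,1}\) exactly when \(m \in S_\bb{E}\). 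Splitting on \(m = 1\), \(m = 2\), and \(m \ge 3\) (and, in the last case, further on whether \(m \in S_\bb{E}\)) yields (9)--(12); here one uses \(1 \notin S_\bb{E}\), \(2 \in S_\bb{E}\), and the identity \(\{a_{p,n} \mid n \ge m-1\} = S_{p,m-1} \cup \bar{S}_{p,m-1}\) valid for \(m \ge 3\) (since then \(m - 1 > 1\)).

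For (b), apply complete additivity. The recurring simplification is that a union of initial segments \(\{a_{p,n} \mid n \le k\}\) with \(k\) unbounded equals \(V_p\), and that \(\bigcup_q D_q = V = \bigcup_q U_q\). For instance \(f_S(D_p) = \bigcup_{q \le p,\, n} f_S(A_{q,n})\): the \(q = p\), \(n = 1\) term alone contributes \(A_{p,1} \cup A_{p,2} \cup D_{p-1} \cup S_{p+1,1}\), varying \(n\) fills out \(V_p\), and every term with \(q < p\) sits inside \(D_{p-1}\), giving (3). The same device collapses \(f_S(U_p)\) and \(g_S(D_p)\) to \(V\) and \(f_S(S_{p,m})\) to \(D_p\), also using that \(\bb{E} \subseteq S_\bb{E}\) is cofinal in \(\bb{N}\). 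For the preimages of the tail sets \(S_{p,m}\) and \(\bar{S}_{p,m}\) one keeps track of the smallest index present in the set, since by (9)--(12) that term dominates the union, and one treats separately the degenerate case \(\bar{S}_{p,m} = \varnothing\) (equivalently \(T = \varnothing\)), where the asserted identity is trivial.

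None of this is conceptually hard; the obstacle is purely organisational. There are many cases, and the index \(1\) is exceptional throughout — it belongs to neither \(\bb{E}\) nor \(\bb{O}\), the recession edges issue from the vertices \(a_{p,1}\), and \(\bar{S}_{p,m}\) omits index \(1\) — so the care is all at the boundaries between \(m = 1\), \(m = 2\), and \(m \ge 3\), and in identifying precisely which tail of a row each operator returns. Carrying the off-by-one bookkeeping through all seventeen identities cleanly is the real content of the proof.
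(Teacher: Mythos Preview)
Your proposal is correct and follows the same approach as the paper, only spelled out in much greater detail: the paper's own proof is a one-liner, literally ``from this and Figure~\ref{fig}, the required results follow immediately.'' Your decomposition via complete additivity into singleton computations, followed by reassembly into the block sets, is exactly the computation the paper is tacitly asking the reader to perform when it points at the figure; you have simply made the bookkeeping explicit.
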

\begin{proof}
If \(X \subseteq V\), then \(f_S(X)\) is the set of vertices that are pointed at by \(X\), while \(g_S(X)\) is the set of vertices that point at \(X\). From this and Figure~\ref{fig}, the required results follow immediately. \end{proof}

\begin{lemma}\label{desc}
Let \(S \subseteq \bb{O}\). Then \(\s{B}_S\) is the subuniverse of \(\bf{F}_S\) generated by \(\s{S}_S\).
\end{lemma}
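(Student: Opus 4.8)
The plan is to show that $\s{B}_S$ is closed under all the operations of $\bf{F}_S$ and contains the generators $\s{S}_S$, and then argue that nothing smaller will do. Containment of the generators is immediate from the definition of $\s{B}_S$ as the set of \emph{finite} unions of elements of $\s{S}_S$ (each single generator is a one-term union). Since every element of $\bf{F}_S$ in $\s{B}_S$ is a finite union and the Boolean operations $\cup$ distribute over finite unions while $f_S, g_S$ are additive (as operators on a complex algebra, they preserve arbitrary unions), it suffices to check closure on the \emph{generators} themselves: if $f_S$ and $g_S$ send each element of $\s{S}_S$ into $\s{B}_S$, then by additivity they send every finite union of generators into $\s{B}_S$ as well. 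The images under $f_S$ and $g_S$ of every generator have already been computed in Lemma~\ref{fg}, and inspecting the eighteen cases there, each image is visibly a finite union of elements of $\s{S}_S$ (using that $D_{p-1} \subseteq D_p$, that $\bigcup\{A_{p,n} \mid n \le k\}$ is a finite union of singletons, and that $S_{p,1} \cup \bar S_{p,1} \cup A_{p,1}$ relates to $V_p$, etc.). So the bulk of the ``closed under $f_S, g_S$'' part is a direct appeal to Lemma~\ref{fg}.

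\textbf{Closure under the Boolean operations} is the part requiring real work. Closure under $\cup$ is trivial (a finite union of finite unions is a finite union). Closure under $\cap$ and under complementation ${}^c$ is where the careful bookkeeping lives. The cleanest route is to first establish closure under complementation of the individual generators: one computes $A_{p,m}^c$, $S_{p,m}^c$, $\bar S_{p,m}^c$, $D_p^c$, and $U_p^c$ and checks each is a finite union of generators. Here $D_p^c = U_{p+1}$ and $U_p^c = D_{p-1}$ are immediate; $A_{p,m}^c$ is a finite union since the ``row'' $V_p$ decomposes as $A_{p,1} \cup \dots \cup A_{p,m} \cup S_{p,m+1} \cup \bar S_{p,m+1}$ together with $D_{p-1} \cup U_{p+1}$ covering all other rows; and $S_{p,m}^c$, $\bar S_{p,m}^c$ are handled similarly by splitting $V$ into the row $V_p$ and the rest, then within $V_p$ using the complementary pair $S_{p,k} / \bar S_{p,k}$ plus finitely many singletons. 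Once complements of generators are known to lie in $\s{B}_S$, De Morgan gives that $\s{B}_S$ is closed under complementation (the complement of a finite union is a finite intersection of generator-complements, each in $\s{B}_S$, so it reduces to closure under $\cap$), and closure under $\cap$ follows by distributing intersection over the finite unions and checking that pairwise intersections of generators are finite unions of generators --- again a finite case analysis, most cases yielding $\varnothing$, a singleton, or one of $S_{p,k}, \bar S_{p,k}, D_{\min}, U_{\max}$.

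\textbf{The converse inclusion} --- that $\s{B}_S$ is not merely \emph{a} subuniverse containing $\s{S}_S$ but \emph{the} one generated by $\s{S}_S$ --- is then automatic: any subuniverse containing $\s{S}_S$ must contain all finite unions of its elements (subuniverses are closed under $\cup$), hence contains $\s{B}_S$; combined with the preceding paragraphs showing $\s{B}_S$ is itself a subuniverse, equality follows.

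\textbf{The main obstacle} I anticipate is organising the complementation and intersection case analysis so that it is genuinely exhaustive without becoming unwieldy: the parameter $m$ interacts with membership in $S_\bb{E}$ (even versus odd, in $S$ or not), so each ``row'' computation silently splits into subcases depending on whether indices land in $S_\bb{E}$, and one must be sure that the decomposition of $V_p$ into $A_{p,1}, \dots, A_{p,m}, S_{p,m+1}, \bar S_{p,m+1}$ is valid for all $m$ (in particular the boundary cases $m=1$ and $m=2$, where $S_{p,m}$ and $\bar S_{p,m}$ have slightly different descriptions because of the $n>1$ restriction on $\bar S$). The safe way to discharge this is to prove once and for all the identity $V_p = \bigcup_{k \le m} A_{p,k} \;\cup\; S_{p,m+1} \cup \bar S_{p,m+1}$ and the disjointness of $S_{p,m+1}$ and $\bar S_{p,m+1}$ within $V_p \setminus \{a_{p,1}\}$, and then everything else reduces to this identity plus $V = \bigcup_{p} V_p$, $D_p = \bigcup_{q \le p} V_q$, $U_p = \bigcup_{q \ge p} V_q$. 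Modulo that lemma, the remaining verifications are routine if tedious, and I would present them compactly (e.g. ``a straightforward case analysis using Lemma~\ref{fg} and the identity above'').
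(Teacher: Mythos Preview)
Your proposal is correct and follows essentially the same approach as the paper's own proof: reduce closure under \(\cup\), \(\cap\), \({}^c\), \(f_S\), \(g_S\) to checks on the generators (via distributivity, De Morgan, and additivity), appeal to Lemma~\ref{fg} for the operator images, and note that minimality is automatic since any subuniverse containing \(\s{S}_S\) contains all finite unions. The only cosmetic difference is ordering---the paper does intersections first, then complements, then \(f_S,g_S\)---and your explicit isolation of the row-decomposition identity \(V_p = \bigcup_{k\le m} A_{p,k}\cup S_{p,m+1}\cup \bar S_{p,m+1}\) is a nice touch that the paper leaves implicit in its complementation computations.
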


\begin{proof} It is clear that \(\s{S}_S \subseteq \s{B}_S\) and that any subuniverse of \(\bf{F}_S\) that extends \(\s{S}_S\) also extends \(\s{B}_S\), so it remains to show that \(\s{B}_S\) is a subuniverse of \(\bf{F}_S\).

By definition, \(\s{B}_S\) is the set of all finite unions of elements of \(\s{S}_S\). Thus, \(\s{B}_S\) is closed under (binary) union and \(\varnothing \in \s{B}_S\). 

By distributivity, to show that \(\s{B}_S\) is closed under (binary) intersection, we only need to show that \(X \cap Y \in \s{B}_S\) if \(X,Y \in \s{S}_S\).  Let \(p \in \bb{Z}\) and let \(m \in \bb{N}\). If \(X \in \s{S}_S\), then \(A_{p,m} \cap X = \varnothing\) or \(A_{p,m} \cap X = A_{p,m}\), hence \(A_{p,m} \cap X \in \s{B}_S\). Clearly, \(U_p \cap U_q = U_{\max(p,m)}\) and \(U_p \cap D_q = \bigcup\{ V_r \mid p \le r \le q\} \), for all \(q \in \bb{Z}\). If \(q \in \bb{Z}\), \(n \in \bb{N}\) and \(X \in \{ S_{q,n}, \bar{S}_{q,n}\}\), then we have \(U_p \cap X = X\) when \(q \ge p\) and \(U_p \cap X = \varnothing\) when \(q < p\). Thus,   \(U_p \cap X \in \s{B}_S\) when \(X \in \s{S}_S\).  If \(q \in \bb{Z}\), then it is clear that \(D_p \cap D_q = D_{\min(p,q)}\).  If \(X \in \{S_{q,n}, \bar{S}_{q,n}\}\), for some \(q \in \bb{Z}\) and \(n \in \bb{N}\), then \(D_p \cap X = X\) if \(q \le p\) and \( D_p \cap X = \varnothing\) if \(q > p\). From this, it follows that  \(D_p \cap X \in \s{B}_S\), for all \(X \in \s{S}_S\). If \(q \in \bb{Z}\) and \(n \in \bb{N}\), then we have  \(S_{p,m} \cap S_{q,n} = S_{p,\max(m,n)} \) when \(q = p\) and \(S_{p,m} \cap S_{q,n} = \varnothing\) when \(q \neq p\). Clearly, \(S_{p,m} \cap \bar{S}_{q,n} = \varnothing\), for all \(p \in \bb{Z}\) and \(n \in \bb{N}\), so \(S_{p,m} \cap X \in \s{B}_S\) if  \(X \in \s{S}_S\).  If \(q \in \bb{Z}\) and \(n \in \bb{N}\), then we have \(\bar{S}_{p,m} \cap \bar{S}_{q,n} = S_{p,\max(m,n)}\) when \(q=p\) and \(\bar{S}_{p,m} \cap \bar{S}_{q,n} = \varnothing\) otherwise, so \(\bar{S}_{p,m} \cap X \in \s{B}_S\), for all \(X \in \s{S}_S\). Combining these results, we find that \(\s{B}_S\) is closed under intersection.

From De Morgan's laws and the observations above, to show the closure of \(\s{B}_S\) under forming complements, it will be enough to show that \(X^c \in \s{B}_S\) if \(X \in \s{S}_S\). As \(A_{p,m}^c = \bigcup \{A_{p,n} \mid  n < m \} \cup U_{p+1} \cup D_{p-1} \cup S_{p,m+1} \cup \bar{S}_{p,m+1}\), \(U_p^c = D_{p-1}\),  \(D_p^c = U_{p+1}\), \(S_{p,m}^c = \bigcup \{A_{p,n} \mid n < m\} \cup \bar{S}_{p,m} \cup U_{p+1} \cup D_{p-1}\) and \(\bar{S}_{p,m}^c = \bigcup \{A_{p,n} \mid  n < m\} \cup S_{p,m} \cup U_{p+1} \cup D_{p-1}\), it follows that \(\s{B}_S\) is closed under forming complements. As \(\varnothing \in \s{B}_S\), this result  tells us that \(V \in \s{B}_S\).

By additivity, to show that \(\s{B}_S\) is closed under \(f_S\) and \(g_S\), we only need to check that \(f_S(X), g_S(X) \in \s{B}_S\) if \(X \in \s{S}_S\). This follows from Lemma \ref{fg}, so \(\s{B}_S\) is indeed a subuniverse of \(\bf{F}_S\), which is what we wanted.  \end{proof}

This result allows us to make the following definition.

\begin{notation}
Let \(S \subseteq \bb{O}\). The subalgebra of \(\bf{F}_S\) with universe \(\s{B}_S\) will be denoted by \(\bf{B}_S\).
\end{notation}

Before we show that these algebras have the properties we want, we will show that they are generated by any element of the form \(V_p\), for some \(p \in \bb{Z}\).

\begin{lemma}\label{4or5} 
Let \(S \subseteq \bb{O}\), let \(X \subseteq V\) and assume that there is a maximal \(p \in \bb{Z}\) with  \(V_p \cap X \neq \varnothing \), say \(q\). Then
\begin{enumerate}
\item
\(f_S^4(X) \cap f_S^2(X)^c = V_{q+2}\) if \(a_{q,1} \in X\),
\item
\(f_S^5(X) \cap f_S^3(X)^c = V_{q+2}\) if \(a_{q,1} \notin X\).
\end{enumerate}
\end{lemma}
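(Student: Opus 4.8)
The plan is to reduce both parts to a single statement and then compute the first few iterates of \(f_S\) directly from Lemma~\ref{fg}; I will freely use that \(f_S\) is monotone, being additive. Reading off Definition~\ref{defn} (see Figure~\ref{fig}): every vertex of \(D_q\) points only at vertices of \(D_{q+1}\), and the only vertex of \(D_q\) pointing at anything in level \(q+1\) is \(a_{q,1}\), which points exactly at the vertices of \(S_{q+1,1}\) there. Consequently, for any \(X \subseteq V\) with top level \(q\) (meaning \(q\) is the maximal \(p\) with \(V_p \cap X \neq \varnothing\)), we have \(f_S(X) \subseteq D_q \cup S_{q+1,1}\), with \(f_S(X) \subseteq D_q\) when \(a_{q,1} \notin X\), and \(S_{q+1,1} \subseteq f_S(X)\) whenever \(a_{q,1} \in X\) (the last inclusion by Lemma~\ref{fg}(1)).

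For the reduction, observe that if \(V_q \cap X \neq \varnothing\) then some \(a_{q,m} \in X\), and \(a_{q,m}\) points at \(a_{q,1}\), so \(a_{q,1} \in f_S(X)\). Hence if \(a_{q,1} \notin X\), the set \(f_S(X)\) is contained in \(D_q\), still has top level \(q\), and contains \(a_{q,1}\). Put \(Y \deq X\) when \(a_{q,1} \in X\) and \(Y \deq f_S(X)\) otherwise. Since \(f_S^5(X) = f_S^4(f_S(X))\) and \(f_S^3(X) = f_S^2(f_S(X))\), both parts of the lemma follow from the single claim: \emph{if \(Y \subseteq V\) has top level \(q\) and \(a_{q,1} \in Y\), then \(f_S^4(Y) \cap f_S^2(Y)^c = V_{q+2}\).}

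To establish the claim I compute iterates. From \(a_{q,1} \in Y\) and the first paragraph, \(S_{q+1,1} \subseteq f_S(Y) \subseteq D_q \cup S_{q+1,1}\). Applying \(f_S\) and using monotonicity together with \(f_S(S_{q+1,1}) = D_{q+1}\) (Lemma~\ref{fg}(5)) and \(f_S(D_q) = D_q \cup S_{q+1,1} \subseteq D_{q+1}\) (Lemma~\ref{fg}(3)) gives \(f_S^2(Y) = D_{q+1}\). Then \(f_S^3(Y) = f_S(D_{q+1}) = D_{q+1} \cup S_{q+2,1}\) by Lemma~\ref{fg}(3), and one further application, via \(f_S(S_{q+2,1}) = D_{q+2}\) and \(S_{q+2,1} \subseteq V_{q+2} \subseteq D_{q+2}\), gives \(f_S^4(Y) = D_{q+2}\). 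Since \(D_{q+1}^c = U_{q+2}\) and \(D_{q+2} \cap U_{q+2} = V_{q+2}\), we conclude \(f_S^4(Y) \cap f_S^2(Y)^c = V_{q+2}\).

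The iterate computations are routine bookkeeping once Lemma~\ref{fg} is available; the step needing the most care is the reduction, namely checking that when \(a_{q,1} \notin X\) the extra application of \(f_S\) both reintroduces \(a_{q,1}\) and creates no edge into level \(q+1\), so that it exactly offsets the missing vertex and the two parts genuinely collapse into one.
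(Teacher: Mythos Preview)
Your proof is correct and follows essentially the same computational path as the paper: both arguments establish that \(f_S^2 = D_{q+1}\) and \(f_S^4 = D_{q+2}\) in case (1) (and the shifted versions in case (2)), then intersect. The paper simply asserts these equalities by appeal to Figure~\ref{fig}, whereas you derive them carefully from Lemma~\ref{fg} via the sandwich \(S_{q+1,1} \subseteq f_S(Y) \subseteq D_q \cup S_{q+1,1}\). Your reduction of case (2) to case (1) by passing to \(Y = f_S(X)\) is a small improvement over the paper's ``similarly to the previous case'', since it makes explicit exactly why one extra application of \(f_S\) compensates for the missing \(a_{q,1}\).
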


\begin{proof}
Firstly, assume that \(a_{q,1} \in X\). Based on Figure \ref{fig},   \(f_S^2(X) = D_{q+1}\) and \(f_S^4(X) = D_{q+2}\), which implies that \(f_S^4(X) \cap f_S^2(X)^c = V_{q+2} \). Thus, (1) holds.

Now, assume that we have \(a_{q,1} \notin X\). Similarly to the previous case,  \(f_S^3(X) = D_{q+1}\) and \(f_S^5(X) = D_{q+2}\), hence \(f_S^5(X) \cap f_S^3(X)^c = V_{q+2} \). Thus, (2) also holds. \end{proof}

We will need the following argument later, so we isolate it here. Firstly, we define some terms (in the signature \(\{\j,\m,{'},f,g,0,1\}\) of tense algebras).

\begin{definition}
\begin{enumerate}
\item
Let \(\beta(x)\deq f^4(x) \m f^2(x)'\).
\item
Let \(\sigma(x)\deq f(x) \m (x \j g^2(\beta(x)) \j f^4(g^{10}(\beta(x)) \m g^{8}(\beta(x))'))'\).
\item
Let \(\nu_3(x)\deq f(\sigma(x)) \m f(x)'\).
\item
Let \(\nu_4(x) \deq f(\nu_3(x)) \m f(\sigma(x))'\).
\item
For each \(n \ge 5\), let \(\nu_n \deq f(\nu_{n-1}(x)) \m f(\nu_{n-2}(x))'\).
\end{enumerate}
\end{definition}

\begin{lemma}\label{steps}
Let \(S \subseteq \bb{O}\) and let \(p \in \bb{Z}\). Then we have \(\sigma(A_{p,1}) = A_{p,2}\) and \(\nu_n(\sigma(x)) = A_{p,n}\), for all \(n \ge 3\).
\end{lemma}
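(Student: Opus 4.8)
The plan is to evaluate all the terms in the statement at $x = A_{p,1}$ and read every set-theoretic operation off Lemma~\ref{fg}, Lemma~\ref{4or5}, and Figure~\ref{fig}. The only laborious part is the computation of $\sigma(A_{p,1})$; the identities for $\nu_n$ then follow by a short induction.

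For $\sigma$: first, applying Lemma~\ref{4or5}(1) with $X = A_{p,1}$ — here the greatest column index meeting $X$ is $q = p$ and $a_{p,1} \in X$ — gives $\beta(A_{p,1}) = f_S^4(A_{p,1}) \cap f_S^2(A_{p,1})^c = V_{p+2}$. Next, using Lemma~\ref{fg}(9)--(12) and~(14), I would compute $g_S(V_{p+2}) = U_{p+2} \cup A_{p+1,1}$ and hence $g_S^2(V_{p+2}) = U_{p+1}$; the $S$-dependent (dashed) edges only contribute vertices already lying in the relevant $U_r$, so these values do not depend on $S$. Iterating with the identities $g_S(A_{q,1}) = U_q$ and $g_S(U_q) = A_{q-1,1} \cup U_q$, an easy induction (carried out on $g_S^{2k-1}$ and $g_S^{2k}$ simultaneously) gives $g_S^{2k}(V_{p+2}) = U_{p+2-k}$ for all $k \ge 1$, so $g_S^8(V_{p+2}) = U_{p-2}$ and $g_S^{10}(V_{p+2}) = U_{p-3}$, whence $g_S^{10}(\beta(A_{p,1})) \cap g_S^8(\beta(A_{p,1}))^c = U_{p-3} \cap D_{p-3} = V_{p-3}$ (using $U_q^c = D_{q-1}$). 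Then $f_S^2(V_{p-3}) = D_{p-2}$ (exactly as in the proof of Lemma~\ref{4or5}), and two more applications of $f_S$ via Lemma~\ref{fg}(3) and~(5) give $f_S^4(V_{p-3}) = D_{p-1}$. Substituting into the definition, $\sigma(A_{p,1}) = f_S(A_{p,1}) \cap (A_{p,1} \cup U_{p+1} \cup D_{p-1})^c$; since $f_S(A_{p,1}) = A_{p,1} \cup A_{p,2} \cup D_{p-1} \cup S_{p+1,1}$ by Lemma~\ref{fg}(1) and $S_{p+1,1} \subseteq U_{p+1}$, deleting $A_{p,1}$, $U_{p+1}$ and $D_{p-1}$ leaves precisely $A_{p,2}$.

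For the $\nu_n$ identities I would induct on $n$, now knowing $\sigma(A_{p,1}) = A_{p,2}$. The base cases come from Lemma~\ref{fg}(1)--(2): $\nu_3(A_{p,1}) = f_S(A_{p,2}) \cap f_S(A_{p,1})^c = A_{p,3}$ and $\nu_4(A_{p,1}) = f_S(A_{p,3}) \cap f_S(A_{p,2})^c = A_{p,4}$, the point being that $f_S(A_{p,m}) = \bigcup\{A_{p,j} \mid j \le m+1\} \cup D_{p-1}$ whenever $m > 1$, while $f_S(A_{p,1})$ has $S_{p+1,1}$ in place of $A_{p,3}$ and $S_{p+1,1}$ sits in a different column. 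For $n \ge 5$, assuming $\nu_{n-1}(A_{p,1}) = A_{p,n-1}$ and $\nu_{n-2}(A_{p,1}) = A_{p,n-2}$ (both second indices being at least $3$, so formula~(2) applies to each), the difference $f_S(A_{p,n-1}) \cap f_S(A_{p,n-2})^c$ telescopes to $A_{p,n}$.

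The main obstacle is the bookkeeping inside $\sigma$: verifying the $g_S$-iteration pattern and confirming that none of the dashed, $S$-dependent edges survive into the final intersections. After that, everything reduces to set differences that are immediate from Lemma~\ref{fg}.
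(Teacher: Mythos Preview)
Your argument is correct and follows the same route as the paper's proof: compute \(\beta(A_{p,1})=V_{p+2}\) via Lemma~\ref{4or5}(1), obtain \(g_S^{2}(V_{p+2})=U_{p+1}\), \(g_S^{8}(V_{p+2})=U_{p-2}\), \(g_S^{10}(V_{p+2})=U_{p-3}\), then \(f_S^{4}(V_{p-3})=D_{p-1}\), and finish with Lemma~\ref{fg}(1); the induction for \(\nu_n\) is identical. The only difference is cosmetic: you spell out the intermediate \(g_S\)- and \(f_S\)-iterates that the paper compresses into ``based on Figure~\ref{fig}''.
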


\begin{proof}
By Lemma \ref{4or5}(1), we have \(\beta(A_{p,1}) = V_{p+2}\). So, based on Figure \ref{fig},  \(g_S^2(\beta(A_{p,1})) = U_{p+1}\). Similarly, \(g_S^8(V_{p+2}) = U_{p-2}\) and \(g_S^{10}(V_{p+2}) = U_{p-3}\), hence \(f_S^4(g_S^{10}(\beta(A_{p,1}) \cap g_S^8(\beta(A_{p,1}))^c) = f_S^4(V_{p-3}) = D_{p-1}\). By Lemma \ref{fg}(1), we have \(\sigma(A_{p,1}) = A_{p,2}\), as claimed.

Next, we will use a (strong) inductive argument for the second claim.  By Lemma \ref{fg}(1) and Lemma \ref{fg}(2),  \(\nu_3(A_{p,1}) = f_S(A_{p,2}) \cap f_S(A_{p,1})^c = A_{p,3}\) and \(\nu_4(A_{p,1}) = f_S(A_{p,3}) \cap f_S(A_{p,2})^c = A_{p,3}\) as \(\sigma(A_{p,1}) = A_{p,2}\). Let \(n \ge 5\) and assume that \(\nu_m(A_{p,1}) = A_{p,m}\), for all \(4 \le m \le n\). From this assumption and Lemma \ref{fg}(2), it follows that \(\nu_{n+1}(A_{p,1}) = f_S(A_{p,n}) \cap f_S(A_{p,n-1})^c = A_{p,n+1}\). Thus, \(\nu_m(A_{p,1}) = A_{p,m}\), for all \(m \ge 3\), as claimed. \end{proof}

\begin{lemma}\label{bgen}
Let \(S \subseteq \bb{O}\) and let \(p \in \bb{Z}\). Then \(\s{B}_S\) is the subuniverse of \(\bf{B}_S\) generated by \(V_p\).
\end{lemma}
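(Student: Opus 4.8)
Write $\c{G}$ for the subuniverse of $\bf{B}_S$ generated by $V_p$; the plan is to prove $\c{G}=\s{B}_S$. Since $\c{G}$ is a subset of the universe of $\bf{B}_S$ we have $\c{G}\subseteq\s{B}_S$ automatically (and $V_p=A_{p,1}\cup S_{p,2}\cup\bar{S}_{p,2}\in\s{B}_S$ confirms that $V_p$ really is an element of $\bf{B}_S$). For the reverse inclusion, by Lemma~\ref{desc} it suffices to show $\s{S}_S\subseteq\c{G}$, and I would produce the members of $\s{S}_S$ in three waves: first the ``slices'' $D_q$, $U_q$ and $V_q$; then the atoms $A_{q,m}$; and finally the sets $S_{q,m}$ and $\bar{S}_{q,m}$.

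For the slices, a short computation from Lemma~\ref{fg} and additivity shows $f_S(V_q)=D_q\cup S_{q+1,1}$ and $f_S(D_q\cup S_{q+1,1})=D_{q+1}$ for every $q\in\bb{Z}$, so $f_S^2(V_p)=D_{p+1}$ and $f_S^2(D_q)=D_{q+1}$; iterating, $D_{p+k}=f_S^{2k}(V_p)\in\c{G}$ for all $k\ge 1$, and therefore $U_q=D_{q-1}^c\in\c{G}$ for all $q\ge p+2$. The main obstacle is now ``crossing the middle'': this climbing-and-complementing procedure misses $D_{p-1},D_p$ and $U_p,U_{p+1}$, and trying to also climb downwards via $g_S$ leaves exactly the same gap. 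The way across is to observe, using Lemma~\ref{fg}(9) and (14), that $g_S(A_{q,1})=U_q$ and $g_S(U_q)=U_q\cup A_{q-1,1}$, so that $A_{q-1,1}=g_S(U_q)\cap U_q^c\in\c{G}$ whenever $U_q\in\c{G}$. Applying this with $q=p+2$ gives $A_{p+1,1}\in\c{G}$, hence $U_{p+1}=g_S(A_{p+1,1})\in\c{G}$, hence $D_p=U_{p+1}^c\in\c{G}$, $U_p=V_p\cup U_{p+1}\in\c{G}$ and $D_{p-1}=U_p^c\in\c{G}$; repeating the analogous moves starting from $U_p$, then $U_{p-1}$, and so on, shows $A_{q,1},U_q,D_q\in\c{G}$ for every $q\le p$. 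Combined with the earlier wave, $D_q,U_q\in\c{G}$ for every $q\in\bb{Z}$, and hence $V_q=D_q\cap U_q\in\c{G}$ for every $q$.

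Once the slices are available, the remaining waves are routine Boolean bookkeeping together with Lemma~\ref{fg} and Lemma~\ref{steps}. For the atoms, $A_{q,1}=g_S(U_{q+1})\cap U_{q+1}^c\in\c{G}$ for every $q$, and then Lemma~\ref{steps} (the term $\sigma$ produces $A_{q,2}$ and the terms $\nu_m$ produce $A_{q,m}$ for $m\ge 3$) gives $A_{q,m}\in\c{G}$ for all $q\in\bb{Z}$ and $m\in\bb{N}$. For the $S$-sets, Lemma~\ref{fg}(3) gives $f_S(D_{q-1})=D_{q-1}\cup S_{q,1}$, whence $S_{q,1}=f_S(D_{q-1})\cap U_q\in\c{G}$ and, for $m\ge 2$, $S_{q,m}=S_{q,1}\cap(A_{q,1}\cup\dots\cup A_{q,m-1})^c\in\c{G}$; similarly, from $V_q=A_{q,1}\cup S_{q,1}\cup\bar{S}_{q,1}$ we get $\bar{S}_{q,1}=V_q\cap(A_{q,1}\cup S_{q,1})^c\in\c{G}$ and, for $m\ge 2$, $\bar{S}_{q,m}=\bar{S}_{q,1}\cap(A_{q,2}\cup\dots\cup A_{q,m-1})^c\in\c{G}$. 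This exhausts $\s{S}_S$, so $\s{B}_S\subseteq\c{G}$, and we are done.

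The one step I expect to require genuine care is crossing the middle: it is tempting to believe that $V_p$ generates only an ``upper'' part of $\bf{B}_S$, and the point is to notice that a single application of $g_S$ to a slice $U_q$ yields $U_q\cup A_{q-1,1}$, from which an atom is isolated by intersecting with $U_q^c$; this reverses the direction of propagation and lets the downward induction run. Everything else reduces to the explicit formulas in Lemma~\ref{fg} and Lemma~\ref{steps} together with Boolean manipulations inside the subuniverse $\c{G}$.
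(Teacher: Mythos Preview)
Your proof is correct and follows the paper's structure closely: both reduce to $\s{S}_S\subseteq\c{G}$ via Lemma~\ref{desc}, obtain the slices $D_q,U_q,V_q$ first, then the atoms $A_{q,m}$ via Lemma~\ref{steps}, and finally $S_{q,m},\bar{S}_{q,m}$ by Boolean manipulations with Lemma~\ref{fg}(3). The only real deviation is your ``crossing the middle'' step. Your claim that ``trying to also climb downwards via $g_S$ leaves exactly the same gap'' is true only if you climb down from $V_p$ itself; the paper instead climbs down from one of the layers already obtained in the upward pass (any $V_q$ with $q\ge p+2$), using $g_S^{2m}(V_q)=U_{q-m}$, which yields all lower $V_r$ by differences and hence no gap arises. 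Your alternative---extracting $A_{q-1,1}=g_S(U_q)\cap U_q^c$ and stepping down level by level---is perfectly valid, just slightly more laboured than necessary.
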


\begin{proof}
Let \(\s{V}_p\) denote the subuniverse of \(\bf{B}\) generated by \(V_p\). By Lemma \ref{desc},  it will be enough to show that \(\s{S}_S \subseteq \s{V}_p\).

Firstly, we claim that \(V_q \in \s{V}_p\), for every \(q \in \bb{Z}\). Based on Figure \ref{fig}, Lemma \ref{fg}(3) and Lemma \ref{fg}(5), we have \(f_S^{2m}(V_p) = D_{p+m}\), for each \(m \in \bb{N}\). This implies that \(V_{p+m+1} = f_S^{2m+2}(V_p) \cap f_S^{2m}(V_p)^c  \in \s{V}_p\), for every \(m \in \bb{N}\), hence we have \(V_{q} \in \s{V}_p\), for every \(q \ge m+2\). Similarly, if \(q \in \bb{Z}\) and \(m \in \bb{N}\), then \(g_S^{2m}(V_q) = U_{q-m}\), hence \(V_{q-m-1} = g_S^{2m+2}(V_q)^c \cap g_S^{2m}(V_q) \in \s{V}_p\). Thus, we must have  \(V_q \in \s{V}_p\), for all \(q\in \bb{Z}\), as claimed.

Based on Figure \ref{fig}, Lemma \ref{fg}(3) and Lemma \ref{fg}(5),   \(f_S^2(V_{q-1}) = D_q\), for each \(q \in \bb{Z}\). Similarly, we also have  \(g_S^2(V_{q+1}) = U_q\), for every \(q \in \bb{Z}\). Hence, by the previous result, we have \(D_q, U_q \in \s{V}_p\), for all \(q \in \bb{Z}\).

By Lemma \ref{fg}(13) and the previous result, \(A_{p,1} = g(U_{q+1}) \cap U_{q+1}^c  \in \s{V}_p\), for all \(q \in \bb{Z}\). So, by Lemma \ref{steps}, we have \(A_{p,n} \in \s{V}_p\), for all \(p \in \bb{Z}\) and \(n \in \bb{N}\).

Based on Lemma \ref{fg}(3),  \(S_{q,m} = f_S(D_{q-1}) \cap (\bigcup \{ A_{q,n} \mid n < m\} \cup D_{q-1})^c\). Hence, by the above results, we must have  \(S_{q,m} \in \s{V}_p\), for all \(q\in \bb{Z}\) and \(m \in \bb{N}\).

Clearly, we must have \(\bar{S}_{q,m} = (\bigcup \{ A_{q,n} \mid n < m\} \cup D_{q-1} \cup U_{q+1} \cup S_{q,m} )^c \), for all \(q \in \bb{Z}\) and \(m \in \bb{N}\). So, based the above results, we must have \(\bar{S}_{q,m} \in \s{V}_p\), for all \(q \in \bb{Z}\) and \(m \in \bb{N}\). 

Based on these results, \(\s{V}_p = \s{B}_S\), which is what we wanted to show. \end{proof}

Now we will shift our focus to varieties. To make use of Proposition~\ref{disc}(3), we will need a number of intermediate results.

\begin{lemma}\label{XorXc}
Let \(S \subseteq \bb{O}\) and let \(X \in \s{B}_S\). Then \(f_S(X) \neq V\) or \(f_S(X^c) \neq V\).
\end{lemma}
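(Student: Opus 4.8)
The plan is to exploit a sharp dichotomy among the elements of \(\s{B}_S\): either \(X\) is \emph{bounded above}, meaning \(X \subseteq D_P\) for some \(P \in \bb{Z}\), or \(U_p \subseteq X\) for some \(p \in \bb{Z}\). The point is that every generator in \(\s{S}_S\) other than the sets \(U_p\) is contained in some \(D_P\) — indeed \(A_{p,m}, S_{p,m}, \bar{S}_{p,m} \subseteq V_p \subseteq D_p\) and \(D_p \subseteq D_p\) — whereas \(U_p \not\subseteq D_P\) for any \(P\). Since \(X\) is, by definition, a finite union \(Y_1 \cup \dots \cup Y_k\) of members of \(\s{S}_S\), it follows (using \(D_P \subseteq D_{P'}\) whenever \(P \le P'\)) that if none of the \(Y_i\) is of the form \(U_p\) then \(X \subseteq D_P\) for the largest \(D_P\) among the \(Y_i\), and otherwise \(U_p \subseteq X\) for some \(p\). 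I would establish this structural observation first.

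Suppose first that \(X \subseteq D_P\). Since the image operator \(f_S\) is monotone, Lemma \ref{fg}(3) gives \(f_S(X) \subseteq f_S(D_P) = D_P \cup S_{P+1,1}\); this set omits every vertex of \(V_{P+2}\) (and also omits \(a_{P+1,1}\), since \(1 \notin S_\bb{E}\)), so \(f_S(X) \neq V\).

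Now suppose instead that \(U_p \subseteq X\) for some \(p\). Then \(X^c \subseteq U_p^c = D_{p-1}\), where the identity \(U_p^c = D_{p-1}\) was already computed in the proof of Lemma \ref{desc}. Applying the previous paragraph to \(X^c\) in place of \(X\) gives \(f_S(X^c) \subseteq f_S(D_{p-1}) = D_{p-1} \cup S_{p,1} \neq V\). Combining the two cases yields the lemma.

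There is no real obstacle here; the only step needing a little care is the structural dichotomy, i.e.\ observing that the sets \(U_p\) are precisely the generators in \(\s{S}_S\) that are not bounded above and checking that this property survives taking arbitrary finite unions. Everything else is a direct appeal to Lemma \ref{fg} and monotonicity of \(f_S\).
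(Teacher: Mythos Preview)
Your proof is correct and follows essentially the same approach as the paper's: both arguments observe that a finite union from \(\s{S}_S\) is bounded above (i.e., contained in some \(D_P\)) precisely when no \(U_p\) appears among the summands, and that at most one of \(X\), \(X^c\) can contain a \(U_p\), so the other has \(f_S\)-image contained in \(D_P \cup S_{P+1,1} \neq V\). The paper compresses this into three sentences citing Lemma~\ref{desc} and Figure~\ref{fig}, whereas you spell out the dichotomy and the appeal to Lemma~\ref{fg}(3) explicitly; the only cosmetic quibble is the phrase ``the largest \(D_P\) among the \(Y_i\),'' which should be read as taking \(P\) to be the maximum of the indices \(P_i\) for which \(Y_i \subseteq D_{P_i}\).
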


\begin{proof}
By Lemma \ref{desc}, \(X\) and \(X^c\) can be represented as unions of finite subsets of \(\s{S}_S\). Clearly, only one of the unions will involve an element of \(\{U_p \mid p \in \bb{Z}\}\). So, based on Figure \ref{fig}, we have \(f_S(X) \neq V\) or \(f_S(X^c) \neq V\), as required.\end{proof}

\begin{lemma}\label{max}
Let \(S \subseteq \bb{O}\) and let \(X \in \s{B}_S\) such that \(X \neq \varnothing\) and \(f_S(X) \neq V\). Then there is a maximal \(p \in \bb{Z}\) with \(V_p \cap X \neq \varnothing\).
\end{lemma}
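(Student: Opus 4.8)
The plan is to exploit the structure of $\s{B}_S$ as finite unions of generators in $\s{S}_S$, together with the observation that the only generators meeting unboundedly many rows $V_p$ (namely the sets $U_q$) are precisely those whose $f_S$-image is all of $V$. So the hypothesis $f_S(X) \neq V$ will prevent any $U_q$ from appearing in a representation of $X$, after which boundedness above is immediate.

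First I would invoke Lemma \ref{desc} to fix a representation $X = Y_1 \cup \cdots \cup Y_k$ with each $Y_i \in \s{S}_S$; since $X \neq \varnothing$ we may take $k \ge 1$. Next I would argue that no $Y_i$ is of the form $U_q$: if $Y_i = U_q$, then $X \supseteq U_q$, so by monotonicity of $f_S$ (it is an image operation) and Lemma \ref{fg}(4) we get $f_S(X) \supseteq f_S(U_q) = V$, contradicting $f_S(X) \neq V$. Consequently each $Y_i$ is one of $A_{q,n}$, $S_{q,n}$, $\bar{S}_{q,n}$, or $D_q$, and in every case $Y_i \subseteq D_q$ for the index $q$ occurring, since $A_{q,n}, S_{q,n}, \bar{S}_{q,n} \subseteq V_q \subseteq D_q$ and $D_q \subseteq D_q$. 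Letting $p^\ast$ be the largest index occurring among $Y_1, \ldots, Y_k$ (a maximum of finitely many integers over a nonempty list), we obtain $X \subseteq D_{p^\ast}$, and hence $V_p \cap X = \varnothing$ for every $p > p^\ast$, because the rows $V_p$ are pairwise disjoint and $D_{p^\ast} = \bigcup_{r \le p^\ast} V_r$.

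Finally, since $X \neq \varnothing$ and $V = \bigcup_{p \in \bb{Z}} V_p$, there is at least one $p$ with $V_p \cap X \neq \varnothing$; the set of all such $p$ is bounded above by $p^\ast$, so it has a maximum, which is exactly what the lemma asserts. I do not anticipate a genuine obstacle: the only things to check are the routine facts that each non-$U$ generator sits inside a single $D_q$ and that $f_S$ is monotone with $f_S(U_q) = V$. The conceptual content is just the dichotomy ``either a representation of $X$ uses some $U_q$, and then $f_S(X) = V$, or none does, and then $X$ is bounded above.''
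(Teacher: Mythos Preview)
Your proof is correct and follows essentially the same approach as the paper: represent \(X\) as a finite union of generators from \(\s{S}_S\) via Lemma~\ref{desc}, use \(f_S(U_q)=V\) (Lemma~\ref{fg}(4)) together with monotonicity to rule out any \(U_q\), and conclude that the remaining generators are bounded above in the row index. The paper's version is simply terser, omitting the explicit containment \(Y_i \subseteq D_q\) and the final pigeonhole step, but the logic is identical.
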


\begin{proof}
By Lemma \ref{desc}, \(X\) can be represented as the union of a finite subset of \(\s{S}\). By assumption, \(f_S(X) \neq V\), so Lemma \ref{fg}(iii) tells us that such a representation cannot involve an element of \(\{U_p \mid p \in \bb{Z}\}\). Since \(X \neq \varnothing\), there is maximal \(p \in \bb{Z}\) with \(V_p \cap X \neq \varnothing\), as claimed.  \end{proof}

Using Lemma \ref{4or5} and the preceding pair of results, it is easy to verify our previous claim that \(\bf{B}_S\) is the subalgebra of \(\bf{F}_S\) generated by its atoms, or by any element of \(\s{B}_S\), for each \(S \subseteq \bb{O}\). The following result will allow us to obtain similar results for ultrapowers.

\begin{lemma}\label{auto}
Let \(S \subseteq \bb{O}\) and let \(p,q \in \bb{Z}\). 
\begin{enumerate}
\item
We have \(\s{B}_S = \{ t^{\bf{B}_S}(V_p) \mid t \trm{ is a unary term}\}\).
\item
If \(t\) and \(s\) are unary terms with  \(t^{\bf{B}_S}(V_p) = s^{\bf{B}_S}(V_p)\), then we have \(t^{\bf{B}_S}(V_q) = s^{\bf{B}_S}(V_q)\).
\item
There is an automorphism of \(\bf{B}_S\) that maps \(V_p\) to \(V_q\).
\end{enumerate}
\end{lemma}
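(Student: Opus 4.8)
The plan is as follows. Part~(1) is immediate from Lemma~\ref{bgen}; part~(3) will come from a ``vertical shift'' automorphism of the frame $\inn{V;R_S}$; and part~(2) then follows formally from~(3). Indeed, for~(1), in any algebra the subuniverse generated by a single element $a$ equals $\{t(a) \mid t \trm{ is a unary term}\}$, and Lemma~\ref{bgen} identifies the subuniverse generated by $V_p$ with $\s{B}_S$, which is exactly~(1).

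For~(3), set $k = q - p$ and let $\psi \colon V \to V$ be defined by $\psi(a_{r,n}) = a_{r+k,n}$; this is a bijection, with inverse $a_{r,n} \mapsto a_{r-k,n}$. Inspecting the three clauses of Definition~\ref{defn}, each constrains the first indices of the two vertices involved only through their difference, and the first index never enters the side conditions on the second indices, so $(\psi(x),\psi(y)) \in R_S$ if and only if $(x,y) \in R_S$; that is, $\psi$ is an automorphism of the frame $\inn{V;R_S}$. By the standard fact that an automorphism of a frame induces an automorphism of its complex algebra (see, e.g., \cite{modal}), the image map $X \mapsto \psi[X] = \{\psi(v) \mid v \in X\}$ is an automorphism of $\bf{F}_S$. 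Since $\psi[A_{r,n}] = A_{r+k,n}$, $\psi[D_r] = D_{r+k}$, $\psi[U_r] = U_{r+k}$, $\psi[S_{r,n}] = S_{r+k,n}$ and $\psi[\bar{S}_{r,n}] = \bar{S}_{r+k,n}$, the map $\psi$ restricts to a permutation of $\s{S}_S$; as $\s{B}_S$ consists of the finite unions of members of $\s{S}_S$ and images commute with unions, $\psi[\s{B}_S] = \s{B}_S$, and so $X \mapsto \psi[X]$ restricts to an automorphism $\phi$ of $\bf{B}_S$. Finally $\phi(V_p) = \psi[V_p] = V_{p+k} = V_q$, which is~(3).

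Part~(2) is then immediate: with $\phi$ as above, for any unary term $t$ we have $t^{\bf{B}_S}(V_q) = t^{\bf{B}_S}(\phi(V_p)) = \phi\bigl(t^{\bf{B}_S}(V_p)\bigr)$ since $\phi$ is an endomorphism, so from $t^{\bf{B}_S}(V_p) = s^{\bf{B}_S}(V_p)$ we get $t^{\bf{B}_S}(V_q) = \phi\bigl(t^{\bf{B}_S}(V_p)\bigr) = \phi\bigl(s^{\bf{B}_S}(V_p)\bigr) = s^{\bf{B}_S}(V_q)$. I do not expect a real obstacle anywhere in this argument: writing down $\psi$ and checking that it preserves $R_S$ and permutes $\s{S}_S$ is a short case analysis, transparent from Figure~\ref{fig}, which is visibly unchanged when every horizontal ``row'' is slid up or down by one unit. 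The only point worth stating carefully is that $\psi$ preserves $R_S$ for the \emph{same} $S$, which holds because $\psi$ fixes the second index, and hence the condition ``$m \in S_\bb{E}$'' in the middle clause of $R_S$.
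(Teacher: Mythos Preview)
Your argument is correct. The underlying idea---the translational self-similarity of the frame $\inn{V;R_S}$---is the same as the paper's, but you organise the proof differently. The paper establishes (2) first, by a one-line appeal to ``self similarity of $\inn{V;R_S}$'', and then \emph{derives} (3) from (1) and (2) abstractly: defining $\mu\bigl(t^{\bf{B}_S}(V_p)\bigr) \deq t^{\bf{B}_S}(V_q)$, using (2) for well-definedness and (1) for surjectivity. You instead prove (3) directly by exhibiting the frame automorphism $\psi\colon a_{r,n}\mapsto a_{r+k,n}$, lifting it to $\bf{F}_S$, and checking it permutes $\s{S}_S$ so restricts to $\bf{B}_S$; then (2) falls out of (3). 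Your route is more explicit---the paper's ``self similarity'' is precisely your map $\psi$, left unnamed---and it has the minor advantage that (2) becomes a formal consequence rather than something to be seen from the picture. One small remark: for (1) the correct reference is indeed Lemma~\ref{bgen} (that $V_p$ generates $\s{B}_S$), as you cite; the paper cites Lemma~\ref{desc} here, which by itself only gives generation by $\s{S}_S$.
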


\begin{proof}
The first statement is an immediate consequence of Lemma \ref{desc}, while (2) is evident from the self similarity of \(\inn{V;R_S}\).

From (1) and (2), it follows that we can define a map \(\mu \colon \s{B}_S \to \s{B}_S\) by setting \(\mu(t^\bf{B}(V_p)) = t^{\bf{B}}(V_q)\), for every unary term \(t\). Combining (1) and (2), we find that \(\mu\) is a bijection. Based on (2), \(\mu\) is an endomorphism of \(\bf{B}_S\). Thus, \(\mu\) is an automorphism of \(\bf{B}_S\), so (3) holds.
\end{proof}

\begin{lemma}\label{embed}
Let \(S \subseteq \bb{O}\), let \(I\) be a non-empty set, let \(\s{U}\) be an ultrafilter over \(I\), and let \(X\in \s{B}_S^I\) with \(X/\s{U} \neq 0\) and \(X/\s{U} \neq 1\). Then \(\bf{B}_S\) embeds into the subalgebra of \(\bf{B}_S^I/\s{U}\) generated by \(X/\s{U}\). 
\end{lemma}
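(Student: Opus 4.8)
The plan is to locate, inside the subalgebra of $\bf{B}_S^I/\s{U}$ generated by $X/\s{U}$, an element $W$ that is $\s{U}$-almost everywhere one of the blocks $V_p$, and then to map $\bf{B}_S$ onto the subalgebra generated by $W$ by ``applying the same term''. First I would arrange that $f(X/\s{U}) \neq 1$. Lemma~\ref{XorXc} says that the sentence $\forall x\,\bigl(f(x) \neq 1 \vee f(x') \neq 1\bigr)$ holds in $\bf{B}_S$, so it holds in the ultrapower by {\L}o\'{s}'s theorem; since $X/\s{U} \neq 0, 1$ forces $X^c/\s{U} \neq 0, 1$, and since $X/\s{U}$ and $X^c/\s{U}$ generate the same subalgebra, I may replace $X$ by $X^c$ if necessary so that $f(X/\s{U}) \neq 1$. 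Writing $X = (X_i)_{i \in I}$, {\L}o\'{s}'s theorem then gives $J \deq \{ i \in I \mid X_i \neq \varnothing \text{ and } f_S(X_i) \neq V \} \in \s{U}$.

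For each $i \in J$, Lemma~\ref{max} yields a maximal $q_i \in \bb{Z}$ with $V_{q_i} \cap X_i \neq \varnothing$, and $J$ is the disjoint union of $J_2 \deq \{ i \in J \mid a_{q_i,1} \in X_i \}$ and $J_3 \deq J \setminus J_2$; exactly one of $J_2, J_3$ lies in $\s{U}$. I would set $t \deq \beta$ if $J_2 \in \s{U}$ and $t(x) \deq f^5(x) \m f^3(x)'$ if $J_3 \in \s{U}$; by Lemma~\ref{4or5}, in either case $t^{\bf{B}_S}(X_i) = V_{q_i + 2}$ for every $i$ in whichever of $J_2, J_3$ lies in $\s{U}$. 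Hence $W \deq t^{\bf{B}_S^I/\s{U}}(X/\s{U})$ lies in the subalgebra generated by $X/\s{U}$ and has a representative $(W_i)_{i \in I}$ with $K \deq \{ i \in I \mid W_i = V_{p_i} \text{ for some } p_i \in \bb{Z} \} \in \s{U}$. It now suffices to embed $\bf{B}_S$ into the subalgebra generated by $W$.

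To do that I would invoke Lemma~\ref{auto}. By part~(1), every element of $\s{B}_S$ has the form $s^{\bf{B}_S}(V_0)$ for some unary term $s$, so one may define $\phi \colon \bf{B}_S \to \bf{B}_S^I/\s{U}$ by $\phi\bigl(s^{\bf{B}_S}(V_0)\bigr) \deq s^{\bf{B}_S^I/\s{U}}(W)$. This is well defined: if $s^{\bf{B}_S}(V_0) = r^{\bf{B}_S}(V_0)$, then $s^{\bf{B}_S}(V_{p_i}) = r^{\bf{B}_S}(V_{p_i})$ for every $i \in K$ by Lemma~\ref{auto}(2), so $s^{\bf{B}_S^I/\s{U}}(W) = r^{\bf{B}_S^I/\s{U}}(W)$ by {\L}o\'{s}'s theorem and $K \in \s{U}$. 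Being given by ``the same term'', $\phi$ is a homomorphism, and its image is precisely the subalgebra generated by $W$. Finally, $\bf{B}_S$ is a subalgebra of $\bf{F}_S$, hence simple by Corollary~\ref{ourdisc}, and $\phi$ is not constant since $\phi(\varnothing) = 0 \neq 1 = \phi(V)$ in the non-trivial algebra $\bf{B}_S^I/\s{U}$; therefore $\phi$ is injective, and composing it with the inclusion of the subalgebra generated by $W$ into the subalgebra generated by $X/\s{U}$ finishes the argument.

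I expect the real work to be in the first two paragraphs: passing from an arbitrary $X/\s{U} \notin \{0,1\}$ to an element of the subalgebra it generates that is $\s{U}$-almost everywhere a $V$-block. This is where the combinatorics of $\inn{V;R_S}$ (through Lemmas~\ref{XorXc}, \ref{max}, and~\ref{4or5}) has to be fed through {\L}o\'{s}'s theorem, and in particular where one must deal with the coordinates outside $J$ and with the case split according to whether $a_{q_i,1} \in X_i$. Once $W$ is secured, the construction of the embedding is purely formal, the only slightly non-obvious point being that injectivity comes for free from the simplicity of $\bf{B}_S$.
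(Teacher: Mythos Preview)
Your proposal is correct and follows essentially the same route as the paper's proof: reduce to \(f(X/\s{U})\neq 1\) via Lemma~\ref{XorXc} and {\L}o\'s, use Lemmas~\ref{max} and~\ref{4or5} to produce an element of the generated subalgebra that is \(\s{U}\)-almost everywhere a block \(V_{p_i}\), and then transfer the one-generator description of \(\bf{B}_S\) from Lemma~\ref{auto} through the ultrapower. The only cosmetic difference is that the paper cites Lemma~\ref{auto}(3) (the automorphisms \(V_0\mapsto V_{p_i}\)) where you unpack the same content via parts~(1) and~(2); your version is in fact the more explicit of the two.
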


\begin{proof}
By Lemma \ref{XorXc} and {\L}o\'s's Theorem,  \(f(X/\s{U}) \neq 1\) or \(f(X/\s{U}')\neq 1\). Without loss of generality, we can assume that \(f(X/\s{U}) \neq 1\). By Lemma~\ref{4or5} and Lemma~\ref{max}, either \(\{i \in I \mid f_S^4(X(i)) \cap f_S^2(X(i))^c =V_p, \trm{ for some }p \in \bb{Z}\}\) or \(\{i \in I \mid f_S^5(X(i)) \cap f_S^3(X(i))^c =V_p, \trm{ for some }p \in \bb{Z}\}\) is an element of \(\s{U}\). Based on Lemma~\ref{auto}(3),  \(\bf{B}_S\) embeds into the subalgebra of \(\bf{B}_S^I/\s{U}\) generated by \(X/\s{U}\), as claimed. 
\end{proof}

Note that we only needed the fact that ultrafilters are prime filters, hence this result applies to principal ultrafilters.

\begin{lemma}\label{cov}
Let \(S \subseteq \bb{O}\). Then \(\Var(\bf{B}_S)\) covers \(\Var(\bf{T}_0)\) in \(\boldsymbol{\Lambda}_\textup{TTA}\). Further, \(\Var(\bf{B}_S)\) is join-irreducible.
\end{lemma}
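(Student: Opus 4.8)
The plan is to establish the two claims in turn: first that $\Var(\bf{B}_S)$ covers $\Var(\bf{T}_0)$, and then that $\Var(\bf{B}_S)$ is join-irreducible, and in fact the second will follow from the structural analysis needed for the first. Recall that $\bf{F}_S = \Cm(\inn{V;R_S})$ is a total tense algebra by Lemma~\ref{reftot} and Proposition~\ref{tense}(1), and $\bf{B}_S \le \bf{F}_S$ is simple by Corollary~\ref{ourdisc}; moreover $\bf{B}_S$ is infinite, so $\bf{B}_S \notin \Var(\bf{T}_0)$ and hence $\Var(\bf{T}_0) \subsetneq \Var(\bf{B}_S)$. To prove that this is a \emph{covering} relation, the key step is to identify the subdirectly irreducible members of $\Var(\bf{B}_S)$. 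Since $\bf{B}_S$ has a discriminator term (it is a total tense algebra, so Proposition~\ref{tense}(2) and Proposition~\ref{disc} apply), Proposition~\ref{disc}(3) tells us that the subdirectly irreducibles of $\Var(\bf{B}_S)$ are exactly $\ISU(\bf{B}_S)$. So I would first compute $\ISU(\bf{B}_S)$: take an ultrapower $\bf{B}_S^I/\s{U}$, and let $\bf{C}$ be an arbitrary subalgebra of it, generated by some element $X/\s{U}$. If $X/\s{U} \in \{0,1\}$ then $\bf{C}$ is the two-element algebra, which is (isomorphic to) $\bf{T}_0$ (the identity maps being determined on a one-atom Boolean algebra — here $f,g$ play the role of the two identity operators). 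Otherwise, by Lemma~\ref{embed}, $\bf{B}_S$ embeds into $\bf{C}$; but $\bf{C}$ is generated by a single element, and by Lemma~\ref{auto}(1) (applied inside the ultrapower via {\L}o\'s's Theorem) every element of $\bf{C}$ is a unary term applied to a generator of a copy of $\bf{B}_S$, so $\bf{C}$ is in fact equal to that copy of $\bf{B}_S$. Hence $\ISU(\bf{B}_S) = \I(\{\bf{B}_S\}) \cup \{\bf{T}_0\}$ (up to the trivial algebra).

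Given that computation, the covering claim follows by a standard argument. Suppose $\c{W}$ is a variety with $\Var(\bf{T}_0) \subseteq \c{W} \subseteq \Var(\bf{B}_S)$ and $\c{W} \neq \Var(\bf{T}_0)$. Then $\c{W}$ contains a subdirectly irreducible algebra not in $\Var(\bf{T}_0)$, i.e. not $\bf{T}_0$; since the subdirectly irreducibles of $\c{W}$ form a subclass of those of $\Var(\bf{B}_S)$, which we have shown to be just copies of $\bf{B}_S$ and $\bf{T}_0$, this algebra must be (isomorphic to) $\bf{B}_S$. Hence $\bf{B}_S \in \c{W}$, so $\Var(\bf{B}_S) \subseteq \c{W}$, giving $\c{W} = \Var(\bf{B}_S)$. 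Thus $\Var(\bf{B}_S)$ covers $\Var(\bf{T}_0)$.

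For join-irreducibility: suppose $\Var(\bf{B}_S) = \c{V}_1 \vee \c{V}_2$ with both $\c{V}_i \subsetneq \Var(\bf{B}_S)$. The subdirectly irreducibles of a join of varieties (in a congruence-distributive setting, which holds here since these are Boolean algebras with operators, hence congruence distributive) lie in the union of the two classes of subdirectly irreducibles, by J\'onsson's lemma; so $\bf{B}_S$, being subdirectly irreducible (indeed simple) in $\Var(\bf{B}_S)$, lies in $\c{V}_1$ or $\c{V}_2$, forcing $\Var(\bf{B}_S) \subseteq \c{V}_i$ for one $i$, a contradiction. Hence $\Var(\bf{B}_S)$ is join-irreducible.

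The main obstacle is the ultrapower computation, specifically verifying that a subalgebra $\bf{C}$ of $\bf{B}_S^I/\s{U}$ generated by a single non-trivial element $X/\s{U}$ coincides with the copy of $\bf{B}_S$ provided by Lemma~\ref{embed}. The delicate point is matching up the generator: Lemma~\ref{embed} embeds $\bf{B}_S$ by sending $V_p$ (for a varying $p$) to an element of the form $f_S^4(X(i)) \cap f_S^2(X(i))^c$ or $f_S^5(X(i)) \cap f_S^3(X(i))^c$ on a large set of coordinates, i.e. to a term applied to $X/\s{U}$; conversely one needs that $X/\s{U}$ itself is a term applied to that image of some $V_p$, which is where Lemma~\ref{auto}(1) and the self-similarity in Lemma~\ref{auto}(2)--(3) are used, transported across the ultraproduct by {\L}o\'s's Theorem. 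Once the equality $\bf{C} \cong \bf{B}_S$ (or $\bf{C} \cong \bf{T}_0$) is secured, everything else is bookkeeping with Proposition~\ref{disc} and J\'onsson's lemma.
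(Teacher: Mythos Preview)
Your overall strategy is the same as the paper's: use Proposition~\ref{disc}(3) to identify the subdirectly irreducibles of \(\Var(\bf{B}_S)\) with \(\ISU(\bf{B}_S)\), and then invoke Lemma~\ref{embed}. But you take an unnecessary detour and introduce a gap in doing so.

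The problem is your claim that \(\ISU(\bf{B}_S)=\I(\bf{B}_S)\cup\I(\bf{T}_0)\), and in particular that a one-generated subalgebra \(\bf{C}\le\bf{B}_S^I/\s{U}\) with nontrivial generator \(X/\s{U}\) is \emph{equal} to the embedded copy of \(\bf{B}_S\) furnished by Lemma~\ref{embed}. Your justification is to ``apply Lemma~\ref{auto}(1) inside the ultrapower via {\L}o\'s's Theorem'' to conclude that \(X/\s{U}\) is a unary term in the image \(W\) of some \(V_p\). This does not follow: Lemma~\ref{auto}(1) only gives, for each coordinate \(i\), a term \(t_i\) with \(X(i)=t_i^{\bf{B}_S}(W(i))\), and the statement ``there exists a term \(t\) with \(X=t(W)\)'' is an infinite disjunction, not a first-order sentence, so {\L}o\'s's Theorem does not apply. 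Concretely, for \(X(i)=A_{0,i}\) over a nonprincipal ultrafilter one computes \(W=V_2/\s{U}\) (a constant), so the embedded copy is the diagonal, yet \(X/\s{U}\) is not equal to any constant and hence lies strictly outside it.

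The fix is simply to drop the equality claim. You never need to know that every nontrivial simple in \(\Var(\bf{B}_S)\) \emph{is} a copy of \(\bf{B}_S\); it suffices that every such algebra \emph{contains} a copy of \(\bf{B}_S\), which is exactly what Lemma~\ref{embed} gives. Then for any \(\c{W}\) with \(\Var(\bf{T}_0)\subsetneq\c{W}\subseteq\Var(\bf{B}_S)\), pick \(\bf{A}\in\Si(\c{W})\) with \(|A|>2\); since \(\bf{A}\in\ISU(\bf{B}_S)\) and \(\bf{B}_S\) embeds into \(\bf{A}\), we get \(\bf{B}_S\in\c{W}\), so \(\c{W}=\Var(\bf{B}_S)\). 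This is precisely the paper's argument. Your join-irreducibility paragraph is fine (and in fact, once the covering is established, join-irreducibility is immediate since every proper subvariety lies below \(\Var(\bf{T}_0)\)).
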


\begin{proof}
It is clear that \(\{\varnothing, V\}\) is a subuniverse of  \(\bf{B}_S\), and that the corresponding subalgebra of \(\bf{B}_S\) is isomorphic to \(\bf{T}_0\). By J\'onsson's Theorem,  \(\Si(\Var(\bf{T}_0)) = \I(\bf{T}_0)\), so by Lemma \ref{ourdisc}, we must have \(\Var(\bf{T}_0) \subsetneq \Var(\bf{B}_S)\). Let \(\c{V} \in \Lambda_{\trm{TTA}}\) with \(\Var(\bf{T}_0) \subsetneq \c{V} \subseteq \Var(\bf{B}_S)\). Clearly, \(\Si(\c{V}) \subseteq \Si(\Var(\bf{B}_S))\). By Proposition \ref{disc}(3),  \(\Si(\Var(\bf{B}_S)) =
\ISU(\bf{B}_S)\), hence \(\Si(\c{V}) \subseteq \ISU(\bf{B}_S)\). Since \(\Var(\bf{T}_0) \subsetneq \c{V}\), there is some \(\bf{A} \in \Si(\c{V})\) with more than 2 elements. By Lemma \ref{embed}, \(\bf{B}_S\) embeds into \(\bf{A}\), so \(\bf{B}_S \in \c{V}\), hence \(\c{V} = \Var(\bf{B}_S)\). Thus, \(\Var(\bf{B}_S)\) is a cover of \(\Var(\bf{T}_0)\) in \(\boldsymbol{\Lambda}_\trm{TTA}\), as required. Based on these arguments, it is evident that  \(\Var(\bf{B}_S)\) is also join-irreducible. \end{proof} 

Lastly, we will need to show that these varieties are distinct. The following pair of results effectively reduce this problem to showing that \(\bf{A}_S\) and \(\bf{A}_T\) are not elementarily equivalent, for all distinct \(S,T \subseteq \bb{O}\).

\begin{lemma}\label{homisiso}
Let \(S,T \subseteq \bb{O}\) and let \(\mu \colon \bf{B}_S \to \bf{B}_T\) be a homomorphism. Then \(\mu\) is an isomorphism.
\end{lemma}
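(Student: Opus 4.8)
The plan is to derive injectivity from simplicity and surjectivity from the fact that $\bf{B}_S$ is singly generated. For injectivity: $\bf{B}_S$ is a subalgebra of $\bf{F}_S$, so it is simple by Corollary~\ref{ourdisc}, whence $\ker\mu$ is either the identity congruence or the full congruence. It cannot be the full congruence, since then $\mu[\s{B}_S]$ would be a one-element subalgebra of $\bf{B}_T$, forcing $\varnothing = V$ in $\bf{B}_T$. So $\mu$ is injective; note also that, being a homomorphism, $\mu$ fixes $0 = \varnothing$ and $1 = V$.

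For surjectivity: by Lemma~\ref{bgen}, $\s{B}_S$ is the subuniverse of $\bf{B}_S$ generated by $V_0$, so $\mu[\s{B}_S]$ is the subuniverse of $\bf{B}_T$ generated by $Y \deq \mu(V_0)$. Injectivity gives $Y \neq \varnothing$, and $f_S(V_0) \neq V$ is immediate from Figure~\ref{fig} (no $a_{0,n}$ points at $a_{2,1}$), so $f_T(Y) = \mu(f_S(V_0)) \neq \mu(V) = V$ by injectivity. Hence Lemma~\ref{max} supplies a maximal $q \in \bb{Z}$ with $V_q \cap Y \neq \varnothing$, and Lemma~\ref{4or5} then shows that one of $f_T^4(Y) \m f_T^2(Y)^c$ and $f_T^5(Y) \m f_T^3(Y)^c$ equals $V_{q+2}$; either way $V_{q+2} \in \mu[\s{B}_S]$. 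Applying Lemma~\ref{bgen} to $\bf{B}_T$, the subuniverse of $\bf{B}_T$ generated by $V_{q+2}$ is all of $\s{B}_T$, so $\mu[\s{B}_S] = \s{B}_T$ and $\mu$ is onto. A bijective homomorphism is an isomorphism, which finishes the proof.

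I expect the surjectivity step to be the main content: it is essentially the ``onto'' strengthening of the embedding argument in Lemma~\ref{embed} specialised to a principal ultrafilter, the new point being that once the target and the ambient algebra coincide, recovering a single set $V_{q+2}$ is enough (by Lemma~\ref{bgen}) to recover the whole algebra. The only slightly delicate part is checking the non-degeneracy hypotheses $Y \neq \varnothing$ and $f_T(Y) \neq V$ needed to invoke Lemmas~\ref{max} and~\ref{4or5} — these come from injectivity together with the trivial observation about $f_S(V_0)$ (or, alternatively, from Lemma~\ref{XorXc}, exactly as in the proof of Lemma~\ref{embed}).
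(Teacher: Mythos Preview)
Your proof is correct and follows essentially the same route as the paper's own argument: simplicity of \(\bf{B}_S\) (Corollary~\ref{ourdisc}) for injectivity, then Lemmas~\ref{max}, \ref{4or5}, and \ref{bgen} applied to \(\mu(V_0)\) for surjectivity. The only cosmetic difference is that the paper cites Lemma~\ref{XorXc} to handle the non-degeneracy hypothesis, whereas you verify \(f_S(V_0)\neq V\) directly and then push it through \(\mu\); you even flag this alternative yourself.
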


\begin{proof}
Since \(\bf{B}_S\) and \(\bf{B}_T\) are non-trivial, the kernel of \(\mu\) must be non-zero. From Corollary \ref{ourdisc}, \(\bf{B}_S\) is simple, so the kernel of \(\mu\) is the identity relation. This implies that \(\mu\) is an embedding, so \(\varnothing \subsetneq \mu(V_0) \subsetneq V\). By Lemma \ref{4or5}, Lemma \ref{bgen}, Lemma \ref{XorXc} and Lemma \ref{max}, we must have \(\mu[\s{B}_S] = \s{B}_T\). Thus,  \(\mu\) is surjective, hence \(\mu\) is an isomorphism, as required. \end{proof}

Using Lemma \ref{steps}, we will construct some useful first-order formulae (again, in the signature \(\{\j,\m,{'},f,g,0,1\}\) of tense algebras). To avoid confusion, we will use \(\curlyvee\) for logical disjunction and  \(\curlywedge\) for logical conjunction.

\begin{definition}
\begin{enumerate}
\item
Let \(\alpha(x) \deq x \not\approx 0 \curlywedge (\forall y \colon x \m y \approx 0 \curlyvee x \m y \approx x)\).
\item
Let \(\varphi(x)\deq \alpha(x) \curlywedge \neg (\exists w, y, z \colon \alpha(w) \curlywedge \alpha(y) \curlywedge \alpha(z) \curlywedge f(x) \m g(x) \approx w \j y \j z)\). 
\item
For each \(n \ge 3\), let \(\tau_n(x)\deq \varphi(x) \curlywedge \nu_n(x) \m f(g^2(x)\m g(x)') \not\approx 0\).
\end{enumerate}
\end{definition}

\begin{lemma}\label{sent}
Let \(S \subseteq \bb{O}\), let \(n \ge 3\) and let \(X \in \s{B}_S\). Then
\begin{enumerate}
\item
\(\bf{B}_S \models \varphi[X]\) if and only if \(X = A_{p,1}\), for some \(p \in \bb{Z}\),
\item
\(\bf{B}_S \models \tau_n[X]\) if and only if \(n \in S_\bb{E}\) and \(X = A_{p,1}\), for some \(p \in \bb{Z}\).

\end{enumerate}
\end{lemma}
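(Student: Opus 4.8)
The plan is to analyse the formulae $\alpha$, $\varphi$, and $\tau_n$ by unwinding their meaning on the explicitly known algebra $\bf{B}_S$, using the action of $f_S$, $g_S$ computed in Lemma~\ref{fg} together with Lemma~\ref{steps}. First I would handle $\alpha$: an element $X \in \s{B}_S$ satisfies $\alpha$ precisely when $X \neq \varnothing$ and $X$ is an atom of $\bf{B}_S$ (the condition $x \m y \approx 0 \curlyvee x \m y \approx x$ for all $y$ just says $X$ has no proper nonempty subelement). So I would first identify the atoms of $\bf{B}_S$: inspecting $\s{S}_S$ and the definition of $\s{B}_S$ as finite unions, the atoms are exactly the singletons $A_{p,m}$ for $p \in \bb{Z}$, $m \in \bb{N}$ (every other generator, and every nontrivial finite union, properly contains some $A_{p,m}$). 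Thus $\bf{B}_S \models \alpha[X]$ iff $X = A_{p,m}$ for some $p,m$.

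Next, for part~(1), I would show that among the atoms $A_{p,m}$, the formula $\varphi$ picks out exactly those with $m = 1$. The extra conjunct of $\varphi$ says that $f(X) \m g(X)$ cannot be written as a join of three atoms, i.e.\ $f_S(X) \cap g_S(X)$ has fewer than four atoms only if\ldots{} wait, more precisely it says $f_S(X) \cap g_S(X)$ is \emph{not} a union of at most three atoms (allowing repetitions, $w \j y \j z$ ranges over unions of one, two, or three atoms). So I would compute $f_S(A_{p,m}) \cap g_S(A_{p,m})$ for each case using Lemma~\ref{fg}. For $m = 1$: $f_S(A_{p,1}) = A_{p,1} \cup A_{p,2} \cup D_{p-1} \cup S_{p+1,1}$ and $g_S(A_{p,1}) = U_p$, so the intersection is $A_{p,1} \cup A_{p,2}$, a union of two atoms --- hence $\varphi$ \emph{fails} for $m=1$?? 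This forces me to recheck the intended reading: in fact I expect the negation makes $\varphi$ hold when the intersection \emph{is} expressible as a join of three atoms is false, so I must instead verify that for $m=1$ the relevant set has at least four atoms or is infinite, and for $m > 1$ it has at most three. The correct approach is: for $m > 1$, $g_S(A_{p,m})$ contains $U_p$ or $U_{p+1}$ (an infinite set), and intersecting with $f_S(A_{p,m}) = \bigcup\{A_{p,n} : n \le m+1\} \cup D_{p-1}$ (for $m>1$) yields something with boundedly few atoms when $m$ is small but in general an infinite-or-large set --- so the case analysis must be done carefully, and I would split on $m=1$, $m=2$, and $m \ge 3$ (and on $m \in S_\bb{E}$ or not), computing each intersection from Lemma~\ref{fg}(1)--(2),(9)--(12) and checking the cardinality/finite-union-of-atoms condition.

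For part~(2), given part~(1) I may assume $X = A_{p,1}$, and I must show $\bf{B}_S \models \tau_n[A_{p,1}]$ iff $n \in S_\bb{E}$. By Lemma~\ref{steps}, $\nu_n(A_{p,1}) = A_{p,n}$ for $n \ge 3$ (noting $\nu_n(x)$ is applied to $x = A_{p,1}$ after the implicit $\sigma$; I would confirm the indexing matches, since Lemma~\ref{steps} states $\nu_n(\sigma(x)) = A_{p,n}$ and $\sigma(A_{p,1}) = A_{p,2}$, so actually $\nu_n(A_{p,1})$ needs to be reconciled --- the computation in the proof of Lemma~\ref{steps} shows $\nu_n(A_{p,1}) = A_{p,n}$ directly). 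Then $f(g^2(x) \m g(x)')$ evaluated at $X = A_{p,1}$: from Lemma~\ref{fg}(9)--(10), $g_S(A_{p,1}) = U_p$ and $g_S^2(A_{p,1}) = g_S(U_p) = A_{p-1,1} \cup U_p$, so $g_S^2(A_{p,1}) \cap g_S(A_{p,1})^c = A_{p-1,1}$, and $f_S(A_{p-1,1}) = A_{p-1,1} \cup A_{p-1,2} \cup D_{p-2} \cup S_{p,1}$. Hence $\tau_n[A_{p,1}]$ asserts $A_{p,n} \cap (A_{p-1,1} \cup A_{p-1,2} \cup D_{p-2} \cup S_{p,1}) \neq \varnothing$, which (since $A_{p,n}$ lives in row $p$, not rows $\le p-1$) reduces to $A_{p,n} \cap S_{p,1} \neq \varnothing$, i.e.\ $a_{p,n} \in S_{p,1} = \{a_{p,k} : k \in S_\bb{E}\}$, i.e.\ $n \in S_\bb{E}$. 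That completes both parts.

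The main obstacle I anticipate is the $\varphi$ computation in part~(1): getting the finite-union-of-three-atoms condition exactly right across all the cases of Lemma~\ref{fg} (distinguishing $m=1,2,\ge 3$ and membership in $S_\bb{E}$), and in particular confirming which side of the dichotomy each case lands on so that $\varphi$ genuinely isolates $m=1$. Once the bookkeeping for the intersections $f_S(A_{p,m}) \cap g_S(A_{p,m})$ is laid out cleanly against Figure~\ref{fig}, the rest is routine substitution.
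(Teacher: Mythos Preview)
Your approach is exactly that of the paper: identify the atoms of \(\bf{B}_S\) as the singletons \(A_{p,m}\), compute \(f_S(A_{p,m}) \cap g_S(A_{p,m})\) from Lemma~\ref{fg}, and for part~(2) combine Lemma~\ref{steps} with the computation \(g_S^2(A_{p,1}) \cap g_S(A_{p,1})^c = A_{p-1,1}\) and \(f_S(A_{p-1,1}) = A_{p-1,1}\cup A_{p-1,2}\cup D_{p-2}\cup S_{p,1}\). Your treatment of part~(2) is correct and essentially identical to the paper's.

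The confusion you hit in part~(1) has a simple fix: in your \(m=1\) computation you dropped \(S_{p+1,1}\). Since \(S_{p+1,1} \subseteq V_{p+1} \subseteq U_p = g_S(A_{p,1})\), the intersection \(f_S(A_{p,1}) \cap g_S(A_{p,1})\) is \(A_{p,1} \cup A_{p,2} \cup S_{p+1,1}\), which contains infinitely many atoms (because \(\bb{E} \subseteq S_{\bb{E}}\)), so \(\varphi\) \emph{does} hold at \(A_{p,1}\); this resolves the ``which direction'' worry without any reinterpretation of \(\varphi\). For \(m > 1\) the paper simply asserts that the intersection is \(A_{p,m-1} \cup A_{p,m} \cup A_{p,m+1}\), a join of three atoms, so \(\varphi\) fails there; it does not carry out the finer split on \(m=2\) versus \(m\ge 3\) or on membership in \(S_{\bb{E}}\) that you anticipated. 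With the \(S_{p+1,1}\) term restored, your plan and the paper's argument coincide.
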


\begin{proof}
If \(\bf{T}\) is a tense algebra and \(x \in T\), then \(\bf{T} \models \alpha[x]\) if and only if \(x\) is an atom, hence \(\bf{B} \models \alpha[X]\) if and only if \(X = A_{p,n}\), for some \(p \in \bb{Z}\) and \(n \in \bb{N}\). By Lemma \ref{fg}(1) and Lemma \ref{fg}(9),  \(f_S(A_{p,n}) \cap g_S(A_{p,n}) = A_{p,1} \cup A_{p,2} \cup S_{p,1}\) if \(n=1\) and \(p \in \bb{Z}\). Similarly,  \(f_S(A_{p,n}) \cap g_S(A_{p,n}) = A_{p,n-1} \cup A_{p,n} \cup A_{p,n+1}\) if \(n > 1\)~and~\(p \in \bb{Z}\). Combining these results, we see that (1) holds.

Based on Lemma \ref{fg}(9) and Lemma \ref{fg}(13), we have \(g_S(A_{p,1}) = U_p\) and \(g_S^2(A_{p,1}) = g_S(U_p) = A_{p-1,1} \cup U_p\), for each \(p \in \bb{Z}\). So, by Lemma \ref{fg}(1),  \(f_S(g_S^2(A_{p,1}) \cap g_S(A_{p,1})^c) = f_S(A_{p-1,1}) =  A_{p-1,1} \cup A_{p-1,2} \cup D_{p-2} \cup S_{p,1}\) when \(p \in \bb{Z}\). By Lemma \ref{steps},  \(\tau_n(A_{p,1}) \cap f_S(g_S^2(A_{p,1}) \cap g_S(A_{p,1})^c)  \neq \varnothing\) if and only if \(n \in S_\bb{E}\), as \(\tau_n(A_{p,1}) = A_{p,n}\). Based on this and (1), (2) also holds. \end{proof}

Now we have the results we need to show that our varieties are distinct.

\begin{lemma}\label{diff}
Let \(S,T \subseteq \bb{O}\) with \(S \neq T\). Then \(\Var(\bf{B}_S) \neq \Var(\bf{B}_T)\).
\end{lemma}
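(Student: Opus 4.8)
The plan is a proof by contradiction: assume $S \neq T$ but $\Var(\bf{B}_S) = \Var(\bf{B}_T)$, and derive that $\bf{B}_S \cong \bf{B}_T$, which is impossible because these two algebras are separated by a first-order sentence. For the latter point, since the desired conclusion is symmetric in $S$ and $T$, I may assume there is some $n \in S \setminus T$; as $S, T \subseteq \bb{O}$ this $n$ is odd, so $n \in S_\bb{E}$ while $n \notin T_\bb{E}$. Lemma~\ref{sent}(2) (with, say, $p = 0$) then gives $\bf{B}_S \models \exists x\, \tau_n(x)$, and the same lemma shows that no element of $\s{B}_T$ witnesses $\tau_n$, so $\bf{B}_T \nmodels \exists x\, \tau_n(x)$. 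Hence it is enough to prove $\bf{B}_S \cong \bf{B}_T$.

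To build the isomorphism I would proceed as follows. First, $\bf{B}_S$ is simple by Corollary~\ref{ourdisc}, hence subdirectly irreducible, and it lies in $\Var(\bf{B}_S) = \Var(\bf{B}_T)$. Since $R_T$ is total, $\bf{F}_T$ is a discriminator algebra by Proposition~\ref{tense}, and the discriminator term of $\bf{F}_T$ also serves as a discriminator term on its subalgebra $\bf{B}_T$; so Proposition~\ref{disc}(3) yields $\bf{B}_S \in \ISU(\bf{B}_T)$. Fix an embedding $\iota \colon \bf{B}_S \hookrightarrow \bf{B}_T^I/\s{U}$. Put $Y \deq \iota(V_0)$; since $\iota$ is injective and $\varnothing \subsetneq V_0 \subsetneq V$, we have $Y \neq 0$ and $Y \neq 1$, and $Y = X/\s{U}$ for some $X \in \s{B}_T^I$. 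By Lemma~\ref{bgen}, $\bf{B}_S$ is generated by $V_0$, so the image $\iota[\bf{B}_S]$ is precisely the subalgebra of $\bf{B}_T^I/\s{U}$ generated by $Y$. Now invoke Lemma~\ref{embed} with $T$ in place of $S$: it tells us that $\bf{B}_T$ embeds into that subalgebra, i.e.\ into $\iota[\bf{B}_S] \cong \bf{B}_S$. Composing with $\iota^{-1}$ gives a homomorphism $\bf{B}_T \to \bf{B}_S$, which is an isomorphism by Lemma~\ref{homisiso}. This contradicts the first paragraph, and the proof is complete.

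I expect the main obstacle to be exactly the step that needs the most care: there is no Cantor--Schr\"oder--Bernstein theorem for general algebras, so merely knowing that $\bf{B}_S$ embeds into an ultrapower of $\bf{B}_T$ is far from knowing $\bf{B}_S \cong \bf{B}_T$, and the separating sentence $\exists x\, \tau_n(x)$ is too quantifier-complex (a universal quantifier is buried in $\alpha$, a negated existential in $\varphi$) to transfer along an embedding into an elementarily equivalent algebra. The resolution is the combination of Lemma~\ref{bgen} (one-generation of $\bf{B}_S$) and Lemma~\ref{embed}: the subalgebra of the ultrapower generated by the image of the single generator $V_0$ must on the one hand equal $\iota[\bf{B}_S]$ and on the other hand contain a copy of $\bf{B}_T$, and then the rigidity of $\bf{B}_S$ provided by Lemma~\ref{homisiso} forces these two algebras to coincide up to isomorphism. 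Once that idea is in place, the remaining verifications (that $Y$ is distinct from $0$ and $1$, that $\iota[\bf{B}_S]$ is generated by $Y$, and that Proposition~\ref{disc}(3) applies to the singleton class $\{\bf{B}_T\}$) are routine.
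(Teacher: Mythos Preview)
Your proof is correct and follows essentially the same route as the paper's: both separate \(\bf{B}_S\) and \(\bf{B}_T\) via the sentence \(\exists x\,\tau_n(x)\), use Proposition~\ref{disc}(3) to place one algebra in \(\ISU\) of the other, then combine Lemma~\ref{embed} with Lemma~\ref{homisiso} to force an isomorphism and reach a contradiction. The only cosmetic differences are that the paper swaps the roles of \(S\) and \(T\) (it embeds \(\bf{B}_T\) into an ultrapower of \(\bf{B}_S\) rather than the other way around) and leaves the invocation of Lemma~\ref{bgen} implicit inside the appeal to Lemma~\ref{embed}.
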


\begin{proof}
Without loss of generality, we can assume that \(S \nsubseteq T\), since \(S \neq T\). Let \(n \in \bb{N}\) with \(n \in S\) and \(n \notin T\). By Lemma \ref{sent},  \(\bf{B}_S \models \exists x \colon \tau_n(x)\) and \(\bf{B}_T \nmodels \exists x \colon \tau_n(x)\), hence \(\bf{B}_S\) and \(\bf{B}_T\) are not elementarily equivalent. Thus, \(\bf{B}_S\) and \(\bf{B}_T\) are not isomorphic, so by Lemma \ref{homisiso}, \(\bf{B}_S\) does not embed into \(\bf{B}_T\). Based on Proposition \ref{disc}(3), Corollary \ref{ourdisc} and Lemma \ref{embed},  \(\bf{B}_T \notin \Var(\bf{B}_S)\), so \(\Var(\bf{B}_S) \neq \Var(\bf{B}_T)\), as claimed.  \end{proof}

Now we just need to put on the finishing touches.

\begin{theorem}\label{mainTA}
\(\Var(\bf{T}_0)\) has  \(2^{\aleph_0}\) join-irreducible covers in \(\boldsymbol{\Lambda}_\textup{TTA}\) and \(\boldsymbol{\Lambda}_\textup{TA}\).
\end{theorem}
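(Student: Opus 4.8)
The plan is to collect the pieces already in place. First I would note that, for every $S \subseteq \bb{O}$, the algebra $\bf{B}_S$ is a total tense algebra: by Lemma~\ref{reftot} the relation $R_S$ is total, so Proposition~\ref{tense}(1) makes $\bf{F}_S = \Cm(\inn{V;R_S})$ a total tense algebra, and $\bf{B}_S$ is a subalgebra of $\bf{F}_S$; hence $\Var(\bf{B}_S) \in \boldsymbol{\Lambda}_\textup{TTA}$. By Lemma~\ref{cov}, each $\Var(\bf{B}_S)$ is a join-irreducible cover of $\Var(\bf{T}_0)$ in $\boldsymbol{\Lambda}_\textup{TTA}$, and by Lemma~\ref{diff} the map $S \mapsto \Var(\bf{B}_S)$ is injective on $\pw(\bb{O})$. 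Since $\bb{O}$ is countably infinite, $|\pw(\bb{O})| = 2^{\aleph_0}$, so $\Var(\bf{T}_0)$ has at least $2^{\aleph_0}$ join-irreducible covers in $\boldsymbol{\Lambda}_\textup{TTA}$. The reverse inequality is immediate: the signature of tense algebras is finite, so there are only countably many tense-algebra equations and hence at most $2^{\aleph_0}$ varieties of tense algebras altogether, which pins the count down to exactly $2^{\aleph_0}$.

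Next I would transfer this from $\boldsymbol{\Lambda}_\textup{TTA}$ to $\boldsymbol{\Lambda}_\textup{TA}$. Let $\c{W}$ denote the variety generated by all total tense algebras, so that $\boldsymbol{\Lambda}_\textup{TTA}$ is exactly the principal ideal $\{\c{V} \in \boldsymbol{\Lambda}_\textup{TA} \mid \c{V} \subseteq \c{W}\}$ of $\boldsymbol{\Lambda}_\textup{TA}$; in particular $\boldsymbol{\Lambda}_\textup{TTA}$ is closed under the joins and meets taken in $\boldsymbol{\Lambda}_\textup{TA}$. Fix $S \subseteq \bb{O}$. If $\c{U} \in \boldsymbol{\Lambda}_\textup{TA}$ satisfies $\Var(\bf{T}_0) \subsetneq \c{U} \subsetneq \Var(\bf{B}_S)$, then $\c{U} \subseteq \Var(\bf{B}_S) \subseteq \c{W}$ puts $\c{U}$ in $\boldsymbol{\Lambda}_\textup{TTA}$, contradicting Lemma~\ref{cov}; so $\Var(\bf{B}_S)$ covers $\Var(\bf{T}_0)$ in $\boldsymbol{\Lambda}_\textup{TA}$ as well. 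Similarly, any decomposition $\Var(\bf{B}_S) = \c{U}_1 \vee \c{U}_2$ in $\boldsymbol{\Lambda}_\textup{TA}$ with both $\c{U}_i \subsetneq \Var(\bf{B}_S)$ has both $\c{U}_i$ below $\c{W}$, hence is a join inside $\boldsymbol{\Lambda}_\textup{TTA}$, again contradicting Lemma~\ref{cov}. Thus each $\Var(\bf{B}_S)$ is a join-irreducible cover of $\Var(\bf{T}_0)$ in $\boldsymbol{\Lambda}_\textup{TA}$ too, and the cardinality bounds of the previous paragraph carry over verbatim.

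At this stage essentially all of the work has already been done in the preceding lemmas, so there is no real obstacle left; the only points needing care are the observation that $\boldsymbol{\Lambda}_\textup{TTA}$ sits inside $\boldsymbol{\Lambda}_\textup{TA}$ as a principal ideal --- which is what lets covering and join-irreducibility pass from the smaller lattice to the larger one --- together with the trivial upper bound on the number of tense-algebra varieties. I would present the argument as the three short steps above: total-tense membership of each $\bf{B}_S$, the injective assignment $S \mapsto \Var(\bf{B}_S)$ producing $2^{\aleph_0}$ distinct join-irreducible covers in $\boldsymbol{\Lambda}_\textup{TTA}$, and the ideal argument lifting everything to $\boldsymbol{\Lambda}_\textup{TA}$.
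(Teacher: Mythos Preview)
Your proposal is correct and follows essentially the same approach as the paper: assemble Lemma~\ref{cov} and Lemma~\ref{diff} to get $2^{\aleph_0}$ distinct join-irreducible covers, then cap the count using the cardinality bound on varieties in a countable signature. You are in fact more thorough than the paper on one point: the paper's written proof handles only $\boldsymbol{\Lambda}_\textup{TTA}$ and leaves the $\boldsymbol{\Lambda}_\textup{TA}$ claim implicit, whereas your principal-ideal argument spells out cleanly why covering and join-irreducibility transfer upward.
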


\begin{proof}
Let \(C\) denote the set of join-irreducible covers of \(\Var(\bf{T}_0)\) in \(\boldsymbol{\Lambda}_\textup{TTA}\). Combining Lemma \ref{cov} and Lemma \ref{diff}, we find that \(|C| \ge 2^{\aleph_0}\). It is easy to see that there are at most \(2^{\aleph_0}\) sets of equations in a countable signature (up to replacing variables), hence \(|C| \le |\Lambda_\textup{TTA}| \le 2^{\aleph_0}\). Therefore \(|C| = 2^{\aleph_0}\), which is what we wanted to show.
\end{proof}

Combining Proposition \ref{covprop} and Theorem \ref{mainTA}, we obtain our main result.

\begin{theorem}\label{mainSA}
\(\Var(\bf{A}_3)\) has exactly \(2^{\aleph_0}\) join-irreducible covers in \(\boldsymbol{\Lambda}_\textup{RSA}\), \(\boldsymbol{\Lambda}_\textup{SA}\), and  \(\boldsymbol{\Lambda}_\textup{NA}\).
\end{theorem}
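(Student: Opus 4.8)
The plan is to derive Theorem~\ref{mainSA} from Theorem~\ref{mainTA} and Proposition~\ref{covprop}, together with a cardinality bound and an easy observation about how the three subvariety lattices sit inside one another. First I would establish the lower bound in $\boldsymbol{\Lambda}_\textup{RSA}$: by Theorem~\ref{mainTA}, $\Var(\bf{T}_0)$ has $2^{\aleph_0}$ join-irreducible covers in $\boldsymbol{\Lambda}_\textup{TTA}$, and Proposition~\ref{covprop} transfers these to $\Var(\bf{A}_3)$. Here it is worth recalling that Proposition~\ref{covprop} is proved (in Section~4 of \cite{tot}) from the term equivalence between the variety of reflexive subadditive symmetric semiassociative \(r\)-algebras and the variety generated by the total tense algebras; since a term equivalence induces an isomorphism of subvariety lattices, the correspondence underlying Proposition~\ref{covprop} carries join-irreducible covers of $\Var(\bf{T}_0)$ to join-irreducible covers of $\Var(\bf{A}_3)$. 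Hence $\Var(\bf{A}_3)$ has at least $2^{\aleph_0}$ join-irreducible covers in $\boldsymbol{\Lambda}_\textup{RSA}$.

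Next I would pass from $\boldsymbol{\Lambda}_\textup{RSA}$ to $\boldsymbol{\Lambda}_\textup{SA}$ and $\boldsymbol{\Lambda}_\textup{NA}$. The variety of reflexive subadditive symmetric semiassociative relation algebras is a subvariety of the variety of semiassociative relation algebras, which is in turn a subvariety of the variety of nonassociative relation algebras; so, up to the obvious identification, $\boldsymbol{\Lambda}_\textup{RSA}$ is a principal ideal of $\boldsymbol{\Lambda}_\textup{SA}$, which is a principal ideal of $\boldsymbol{\Lambda}_\textup{NA}$. Since $\Var(\bf{A}_3)$ lies in $\boldsymbol{\Lambda}_\textup{RSA}$ (as already used in Proposition~\ref{covprop}), $\Var(\bf{A}_3)$ and all of the covers produced above lie inside this ideal. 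In a principal ideal of a complete lattice the covering relation and the notion of join-irreducibility agree, on elements of the ideal, with those of the ambient lattice; consequently each of the $2^{\aleph_0}$ join-irreducible covers of $\Var(\bf{A}_3)$ found in $\boldsymbol{\Lambda}_\textup{RSA}$ is also a join-irreducible cover in $\boldsymbol{\Lambda}_\textup{SA}$ and in $\boldsymbol{\Lambda}_\textup{NA}$, giving the lower bound $2^{\aleph_0}$ in all three lattices.

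For the matching upper bound I would argue exactly as in the proof of Theorem~\ref{mainTA}: the signature of nonassociative relation algebras is finite, so there are at most $2^{\aleph_0}$ sets of equations over it, hence at most $2^{\aleph_0}$ subvarieties, and a fortiori at most $2^{\aleph_0}$ covers of $\Var(\bf{A}_3)$, in each of $\boldsymbol{\Lambda}_\textup{NA}$, $\boldsymbol{\Lambda}_\textup{SA}$ and $\boldsymbol{\Lambda}_\textup{RSA}$. Combining the two bounds yields exactly $2^{\aleph_0}$ join-irreducible covers in each of the three lattices. The only delicate point is the very first step: one must check that Proposition~\ref{covprop} transfers \emph{join-irreducible} covers rather than merely covers, which is why I would make explicit that the correspondence it rests on is built from an isomorphism of subvariety lattices; everything else is routine.
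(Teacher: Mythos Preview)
Your proposal is correct and follows the same approach as the paper, which simply says ``Combining Proposition~\ref{covprop} and Theorem~\ref{mainTA}, we obtain our main result.'' You have filled in the details the paper leaves implicit---the preservation of join-irreducibility under the term-equivalence correspondence, the passage from $\boldsymbol{\Lambda}_\textup{RSA}$ to $\boldsymbol{\Lambda}_\textup{SA}$ and $\boldsymbol{\Lambda}_\textup{NA}$ via principal ideals, and the cardinality upper bound---and your treatment of each is sound.
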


We can also use Theorem \ref{mainTA} to obtain similar results on the subvariety lattices of other varieties of \(r\)-algebras and nonassociative relation algebras; see Section 4 of \cite{tot} for more details.

\section{Concluding remarks}

In the previous section we saw that \(\Var(\bf{T}_0)\) has \(2^{\aleph_0}\) covers in \(\boldsymbol{\Lambda}_\textup{TA}\) and  \(\boldsymbol{\Lambda}_\textup{TTA}\), and that \(\Var(\bf{A}_3)\) has \(2^{\aleph_0}\) covers in \(\boldsymbol{\Lambda}_\textup{SA}\) and  \(\boldsymbol{\Lambda}_\textup{RSA}\). However, the problem of completely characterizing these covers remains open. Based on a computer search, it seems that there are finite semiassociative relation algebras that generate covers of \(\Var(\bf{A}_3)\) in \(\boldsymbol{\Lambda}_\textup{SA}\) that are not in the list in \cite{jipphd}. In fact, based on this search, it seems reasonable to conjecture that there are finite algebras with arbitrarily large atom sets that generate covers of \(\Var(\bf{A}_3)\) in  \(\boldsymbol{\Lambda}_\textup{SA}\), so the problem of completely characterizing covers of \(\Var(\bf{A}_3)\) in \(\boldsymbol{\Lambda}_\textup{SA}\) could prove to be quite difficult.

Using Lemma \ref{fg} and the term equivalence in \cite{tot}, it is not too difficult to show that the semiassociative relation algebras corresponding to the tense algebras constructed above are not relation algebras. (For example, \(A_{0,1}(A_{0,1} A_{1,1})\) and \((A_{0,1} A_{0,1}) A_{1,1}\) are always distinct, so associativity fails.) Thus, the results obtained above do not appear to provide any information about the covers of \(\Var(\bf{B}_3)\) in the lattice \(\boldsymbol{\Lambda}_\textup{RA}\) of subvarieties of the variety of relation algebras.

Based on these observations, the following problems seem like reasonable starting points for further research in this area.

\begin{problem}
Classify the covers of \(\Var(\bf{T}_0)\) in \(\boldsymbol{\Lambda}_\textup{TA}\) (or \(\boldsymbol{\Lambda}_\textup{TTA}\)).
\end{problem}

\begin{problem}
Classify the covers of \(\Var(\bf{A}_3)\) in \(\boldsymbol{\Lambda}_\textup{SA}\) (or \(\boldsymbol{\Lambda}_\textup{RSA}\)).
\end{problem}

\begin{problem}
Determine whether or not \(\Var(\bf{A}_3)\) has infinitely many finitely generated covers in \(\boldsymbol{\Lambda}_\textup{SA}\).
\end{problem}

\begin{problem}
Determine the number of covers of \(\Var(\bf{A}_3)\) in \(\boldsymbol{\Lambda}_\textup{RA}\).
\end{problem}

\begin{problem}
Classify the covers of \(\Var(\bf{A}_3)\) in \(\boldsymbol{\Lambda}_\textup{RA}\).
\end{problem}

\subsection*{Acknowledgment}
This is a pre-print of an article published in Algebra Universalis. The final authenticated version is available online at:  \url{https://doi.org/10.1007/s00012-020-0646-9}.

The first author would like to thank Peter Jipsen for recommending \cite{tot}, and Eli Hazel for solving a mysterious {\LaTeX} issue.

\end{document}